\newtheorem{thm}{Theorem}
\newtheorem{lem}[thm]{Lemma}
\newtheorem{wn}[thm]{Corollary}
\newtheorem{df}{Definition}
\newtheorem{obs}[thm]{Observation}
\newenvironment{proof}{{\bf Proof.}}{\hspace*{\fill} \rule{2mm}{2mm} \par \hspace{0.1mm}}
\title{Total domination multisubdivision number of a graph}
\author{ D. Avella-Alaminos $^1$, M. Dettlaff $^{2}$, M. Lema\'nska $^{2}$, R. Zuazua $^{1}$
\\
\\
$^1${\small  Universidad Nacional Aut\'onoma de M\'exico, Mexico,} 
\\ {\small avella\@@matematicas.unam.mx, ritazuazua\@@ciencias.unam.com}
\\
$^2${\small Gda\'nsk University of Technology, Poland,}
\\ {\small mdettlaff\@@mif.pg.gda.pl, magda\@@mif.pg.gda.pl }
\\
}
\date{}
\begin{document}
\maketitle
\begin{abstract} 
\emph{The domination multisubdivision number} of a nonempty graph $G$ was defined in \cite{magdad} as the minimum positive integer $k$ such that there exists an edge which must be subdivided $k$ times to increase the domination number of $G$. Similarly we define the \emph{total domination multisubdivision number} msd$_{\gamma_t}(G)$ of a graph $G$ and we show that for any connected graph $G$ of order at least two, msd$_{\gamma_t}(G)\leq 3.$ We show that for trees the total domination multisubdivision number is equal to the known total domination subdivision number. We also determine the total domination multisubdivision number for some classes of graphs and characterize trees $T$ with msd$_{\gamma_t}(T)=1$. 
\end{abstract}

{\it Keywords:} (Total) domination; (total) domination subdivision number; (total) domination multisubdivision number; trees .

{\it AMS Subject Classification Numbers:}  05C69; 05C05; 05C99.

\section{Introduction}
In this paper we consider connected graphs with $n\geq 2$ vertices and we use $V=V(G)$ and $E=E(G)$ for the vertex set and the edge set of a graph $G.$  The \emph{neighbourhood} $N_{G}(v)$ of a vertex $v\in V(G)$ is the set of all vertices adjacent to $v$, the \emph{closed neighbourhood} $N_{G}[v]$ of a vertex $v\in V(G)$ is $N(v)\cup \{v\}$. The \emph{degree} of a vertex $v$ is $d_{G}(v)=|N_{G}(v)|.$ The \emph{distance} between two vertices $a$ and $b$, denoted by $d_G(a,b)$, is the length of the shortest $ab$-path in $G$. For a subset of vertices $X\subseteq V(G)$, the distance $d(a,X)=\min\{d(a,x):\ x\in X\}$.  The \emph{diameter} diam$(G)$ of a connected graph $G$ is the maximum distance between two vertices of $G$.

We say that a vertex $v$ of a graph $G$ is an {\it end vertex} or a leaf if $v$ has exactly one neighbour in $G$. We denote the set of all leaves in $G$ by $\Omega(G).$ A vertex $v$ is called a {\it support vertex} if it is adjacent to a leaf. If $v$ is adjacent to more than one leaf, then we call $v$ a {\it strong support vertex}. The edge incident with a leaf is called a {\it pendant edge}, in the other case we call it an {\it inner edge}.

The \emph{private neighbourhood of a vertex u with respect to a set} $D \subseteq V (G)$, where $u \in D$, is the set $PN_G[u,D] = N_G[u] - N_G[D - \{u\}]$. If $v \in PN_G[u,D]$, then we say that $v$ is a private neighbour of $u$ with respect to the set $D$.

A subset $D$ of $V(G)$ is \emph{dominating} in $G$ if every vertex of $V(G)-D$ has at least one neighbour in $D.$ Let $\gamma (G)$ be the minimum cardinality among all dominating sets in $G.$ A dominating set $D$ in $G$ with   $|D|=\gamma(G)$ is called a \emph{$\gamma (G)$-set} or a minimum dominating set of $G$ .

For a graph $G=(V,E)$, subdivision of the edge $e=uv\in E$ with vertex $x$ leads to a graph with vertex set $V\cup \{x\}$ and edge set $(E-\{uv\})\cup \{ux,xv\}$. 
Let $G_{e,t}$ denote the graph $G$ with subdivided edge $e$ with $t$ vertices (instead of edge $e=uv$ we put a path $(u,x_1,x_2,\ldots ,x_t,v)$). For $t=1$ we write $G_e.$ The vertices $\{ x_1,x_2,...,x_t\}$ are called subdivision vertices.

The \emph{domination subdivision number}, sd$_{\gamma}(G)$, of a graph $G$ is the minimum number of edges which must be subdivided (where each edge can be subdivided at most once) in order to increase the domination number. We consider subdivision number for connected graphs of order at least $3$, since the domination number of the graph $K_2$ does not increase when its only edge is subdivided. The domination subdivision number was defined in \cite{vel} and studied for example in \cite{fav1,myn,fav2}.

Let $G$ be a connected graph of order at least~2. By msd$_{\gamma}(uv)$ we denote the minimum number of subdivisions of the edge $uv$ such that $\gamma(G)$ increases. In \cite{magdad}, the \emph{domination multisubdivision number} of $G$, denoted by msd$_{\gamma }(G)$, was defined, as

$$ msd_{\gamma } (G)=\min \{msd_{\gamma }(uv):\ uv\in E(G)\}. $$

A set $S$ of vertices in a graph $G$ is a \emph{total dominating} set of $G$ if every vertex og $G$ is adjacent to a vertex in $S.$ The \emph{total domination number} $\gamma_t(G)$ is the minimum cardinality of a total dominating set of $G.$ A total dominating set $S$ in $G$ with   $|S|=\gamma_t(G)$ is called a \emph{$\gamma_t (G)$-set} or a minimum total dominating set of $G$ .
The \emph{total domination subdivision number} sd$_{\gamma_t}(G)$ of a graph $G$ (defined in \cite{haynes}) is the minimum number of edges that must be subdivided (where each edge in $G$ can be subdivided at most once) in order to increase the total domination number.

Similarly like above we define the total domination multisubdivision number of a graph~$G$.
\begin{df}
Let msd$_{\gamma_t}(uv)$ be the minimum number of subdivisions of the edge $uv$ such that $\gamma_t(G)$ increases. The \emph{total domination multisubdivision number} of a graph $G$ of order at least $2$, denoted by msd$_{\gamma_t}(G)$, is defined as
$$ msd_{\gamma_t}(G)=\min \{msd_{\gamma_t}(uv):\ uv\in E(G)\}. $$
\end{df}

For any unexplained terms see \cite{fund}. 
\section{Preliminary results}

In this section we determine the total domination multisubdivision number for some classes of graphs and we prove that for any connected graph $G$ of order at least $2$ we have msd$_{\gamma_t}(G)\leq 3$.  Let $G$ be a graph. It is clear  that sd$_{\gamma_t}(G)=1 \textrm{ if and only if msd}_{\gamma_t}(G)=1.$ 

We start with the next useful observation.
\begin{obs}
If $G$ is not a star, then it is always possible to find a $\gamma_t(G)$-set $D$ such that $D\cap \Omega(G)=\emptyset$.
\label{property}
\end{obs}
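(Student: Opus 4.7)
The plan is to start with any $\gamma_t(G)$-set $D$ chosen so that $|D\cap\Omega(G)|$ is minimum among all such sets, and then derive a contradiction from $D\cap\Omega(G)\neq\emptyset$ by exhibiting a local swap that either shrinks $D$ or reduces the number of leaves it contains.

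Suppose $\ell\in D\cap\Omega(G)$ and let $s$ be the unique neighbor of $\ell$ (its support vertex). Since $D$ totally dominates $\ell$ and $\ell$ has only one neighbor, we must have $s\in D$. I split according to how many neighbors of $s$ lie in $D$. If $s$ has some neighbor $w\in D$ different from $\ell$, then I claim $D\setminus\{\ell\}$ is still total dominating: the single task that $\ell$ was performing was totally dominating $s$, and that is now covered by $w$, while every other vertex retains its dominator in $D\setminus\{\ell\}$. This contradicts $|D|=\gamma_t(G)$, ruling out the case.

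Hence $\ell$ is the only neighbor of $s$ inside $D$. Here I invoke the hypothesis that $G$ is not a star to produce a non-leaf neighbor $v$ of $s$: if every neighbor of $s$ were a leaf, then $\{s\}\cup N_G(s)$ would be a connected component of $G$, and connectivity would force $G$ to be exactly a star centered at $s$, contradicting the assumption. Form $D'=(D\setminus\{\ell\})\cup\{v\}$. A direct check verifies that $D'$ is total dominating: $\ell$ is dominated by $s\in D'$, the pair $s,v$ totally dominate each other, and every other vertex keeps a dominator from $D\setminus\{\ell\}\subseteq D'$. Also $v\notin D$ (otherwise $s$ would have two neighbors in $D$, a case already excluded), so $|D'|=|D|=\gamma_t(G)$. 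But $D'$ contains one fewer leaf than $D$, contradicting the minimality of $|D\cap\Omega(G)|$.

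The only nontrivial ingredient is the existence of the non-leaf neighbor $v$, which is exactly where the hypothesis that $G$ is not a star is used; everything else is a routine verification that the swap preserves total domination.
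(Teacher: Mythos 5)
Your proof is correct: the extremal choice of $D$ minimizing $|D\cap\Omega(G)|$, the two cases on $|N_G(s)\cap D|$, and the swap $\ell\mapsto v$ for a non-leaf neighbour $v$ of the support $s$ (whose existence is exactly where non-starness enters) all check out. The paper states this observation without proof, and your argument is the standard exchange argument one would supply for it.
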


In \cite{haynes}, it has been shown that for any graph $G$ with adjacent support vertices sd$_{\gamma_t}(G)=1.$

Similarly like for the domination subdivision number in~\cite{myn} we have the next result.

\begin{lem}\label{o2} If $G$ contains an end vertex not belonging to any minimum total dominating set of $G$ or if there is an inner edge $xy$ in $G$ such that $x,y$ do not belong to any minimum total dominating set of $G,$ then sd$_{\gamma_t}(G)=1.$
\end{lem}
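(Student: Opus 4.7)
The plan is to exhibit, in each of the two stated scenarios, a single edge whose subdivision strictly increases $\gamma_t$. In both cases I would argue by contradiction: assuming $\gamma_t(G_e)\le \gamma_t(G)$ for the chosen edge $e$, I take a minimum total dominating set $S'$ of $G_e$ and convert $S'$ into a minimum total dominating set of $G$ whose membership contradicts the hypothesis of the lemma.

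For the leaf case, let $v$ be a leaf outside every minimum total dominating set, let $u$ be its support, and set $e=uv$; write $u,x,v$ for the subdivided path in $G_{uv}$. Since $v$ is still a leaf with unique neighbour $x$, we have $x\in S'$, and since $x\in S'$ needs a neighbour in $S'$, one of $u,v$ lies in $S'$ as well. I would then show that $S:=(S'\setminus\{x\})\cup\{u,v\}$ is a total dominating set of $G$: the vertices $u$ and $v$ totally dominate one another inside $S$, while every other vertex $w\in V(G)$ inherits its old $S'$-dominator, since that dominator cannot be $x$ (whose only $G_{uv}$-neighbours are $u,v$). Because $|\{u,v\}\cap S'|\ge 1$, we have $|S|\le |S'|\le \gamma_t(G)$, so $S$ is a minimum total dominating set of $G$ containing $v$, a contradiction.

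For the inner edge case, let $xy$ be an inner edge with $x,y$ outside every minimum total dominating set, set $e=xy$, and write $x,z,y$ for the subdivided path in $G_{xy}$. If $z\notin S'$, then $z$ is dominated in $S'$ by $x$ or $y$, so $S:=S'\subseteq V(G)$ is already a minimum total dominating set of $G$ containing $x$ or $y$, contradicting the hypothesis. If $z\in S'$, then by symmetry we may assume $x\in S'$, and I would verify that $S:=(S'\setminus\{z\})\cup\{y\}$ is a total dominating set of $G$ of size at most $|S'|$ containing both $x$ and $y$; the verification is analogous to the one in the leaf case.

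The main obstacle is a small but essential verification: in each case one of the original endpoints of the subdivided edge may have been dominated in $S'$ only by the subdivision vertex (by $x$ in the leaf case, or by $z$ in the inner edge case), and one must check that the vertex freshly added to $S$ (namely $v$ in the leaf case, or $y$ in the inner edge case) then takes over as a total dominator of that endpoint inside $G$. Once this point is handled, the counting $|S|\le |S'|\le \gamma_t(G)$ closes each case.
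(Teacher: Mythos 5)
Your proposal is correct and follows essentially the same route as the paper: subdivide the pendant edge (resp.\ the inner edge), and convert a minimum total dominating set of the subdivided graph into a total dominating set of $G$ of no larger size that contains the forbidden leaf (resp.\ an endpoint of the forbidden edge), yielding the required contradiction. The only cosmetic difference is that in the leaf case the paper first normalizes the set via its Observation~1 to contain no end vertices (forcing both the support and the subdivision vertex into it), whereas you handle an arbitrary minimum set by swapping the subdivision vertex for $\{u,v\}$ and counting; both work.
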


\begin{proof} Let $u$ be an end vertex not belonging to any $\gamma_t(G)$-set, let $v$ be the only neighbour of $u$ in $G$ and let $G'$ be a graph obtained from $G$ by a subdivision of the edge $uv$ by a vertex $w.$ By Observation~\ref{property}, exists $D'$ a minimum total dominating set with no end vertex of $G'$. Then $v,w\in D'$. The set $(D'-\{w\})\cup \{u\}$ is a  total dominating set of $G.$ Since this set contains  $u,$ it is not a minimum total dominating set of $G.$ Thus $\gamma_t(G)< |(D'-\{w\})\cup \{u\}|=|D'|$ and  sd$_{\gamma_t}(G)=1.$ 

Now suppose that there is an inner edge $xy$ in $G$ such that $x,y$ do not belong to any minimum total dominating set of $G$. Let $G'$ be a graph obtained by subdividing $xy$ with the vertex $w$ and consider any $\gamma_t(G')$-set $D'.$  If $w\notin D',$ then $D'$ is a total dominating set of $G$ containing $x$ or $y$ and by hypothesis $|D'|>\gamma_t(G)$, so we are done.

Now assume $w\in D'.$ Then $D'\cap \{ x,y\} \neq \emptyset $.  Without loss of generality suppose $x\in D'.$ Then $D=(D'-\{ w\} )\cup \{ y\}$ is a total dominating set of $G$ containing $x$ and $y$. From the assumption, it can not be minimum and similarly like before $\gamma_t(G) < |D|\leq |D'|.$ 
\end{proof}

The next lemma gives us a sufficient condition to have the total domination multisubdivision number equal to two.

\begin{lem}\label{l1} If $G$ with order $n\geq 3$ has a universal vertex, then msd$_{\gamma_t}(G)=2.$
\end{lem}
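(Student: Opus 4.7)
The plan is to prove both $\text{msd}_{\gamma_t}(G)\geq 2$ and $\text{msd}_{\gamma_t}(G)\leq 2$ separately, after first pinning down $\gamma_t(G)=2$. Let $u$ denote the universal vertex. Since $n\geq 3$, the set $\{u,v\}$ for any fixed neighbour $v$ of $u$ total dominates $G$, and $\gamma_t$ cannot equal $1$, so $\gamma_t(G)=2$.

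For the lower bound I would use the equivalence $\text{sd}_{\gamma_t}(G)=1 \Leftrightarrow \text{msd}_{\gamma_t}(G)=1$ recorded at the start of Section 2, reducing the task to showing that no single subdivision of an edge increases $\gamma_t$. I would split on whether the subdivided edge $e$ is incident to $u$: if $e=uv$ is subdivided by $w$, then $\{u,w\}$ total dominates $G_e$, because $u$ still dominates every vertex of $V\setminus\{v\}$ together with $w$, while $w$ dominates $v$ and $u$; if $e=ab$ avoids $u$, then $\{u,a\}$ total dominates $G_e$, since $u$ remains universal on $V(G_e)\setminus\{w\}$ and the new vertex $w$ is dominated by $a$. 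In either case $\gamma_t(G_e)\leq 2$, so $\text{sd}_{\gamma_t}(G)\geq 2$.

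For the upper bound I would fix an edge $e=uv$ incident to $u$, subdivide it twice to obtain the path $u,w_1,w_2,v$ in $G_{e,2}$, and argue $\gamma_t(G_{e,2})\geq 3$. Suppose $S\subseteq V(G_{e,2})$ has size $2$ and total dominates. The degree-two vertex $w_1$ forces $S\cap\{u,w_2\}\neq\emptyset$ and $w_2$ forces $S\cap\{w_1,v\}\neq\emptyset$; combined with the fact that the edge $uv$ is no longer present in $G_{e,2}$ and with the existence (since $n\geq 3$) of some $y\in V\setminus\{u,v\}$ that is adjacent to neither $w_1$ nor $w_2$, a short case check on the few compatible pairs rules out each possibility. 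Thus $\gamma_t(G_{e,2})\geq 3>\gamma_t(G)$, so $\text{msd}_{\gamma_t}(e)\leq 2$ and therefore $\text{msd}_{\gamma_t}(G)\leq 2$.

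The main obstacle is handling the finitely many candidate size-$2$ sets in $G_{e,2}$ cleanly. The trickiest configurations are $S=\{u,v\}$, which fails because $u$ and $v$ are no longer adjacent in $G_{e,2}$ and neither is adjacent to itself, so $v$ has no neighbour in $S$; and $S=\{w_1,w_2\}$, which dominates the four vertices on the subdivided path perfectly but leaves every vertex of $V\setminus\{u,v\}$ undominated — this is precisely where the hypothesis $n\geq 3$ is used. Once these two are dispatched, the remaining pairs are eliminated by observing that either $w_1$ or $w_2$ loses all of its at most two neighbours.
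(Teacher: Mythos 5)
Your proof is correct and follows essentially the same route as the paper's: establish $\gamma_t(G)=2$, exhibit a size-two total dominating set of $G_e$ for every single subdivision (splitting on whether $e$ is incident to the universal vertex), and show that subdividing an edge at the universal vertex twice forces $\gamma_t$ up to $3$ — the paper merely asserts these facts where you spell out the case check. One small correction to your closing sentence: for the two remaining candidate pairs $\{u,w_1\}$ and $\{w_2,v\}$ it is \emph{not} the case that $w_1$ or $w_2$ loses all its neighbours in $S$ (both are dominated in each of these configurations); what actually fails is that $v$ (respectively $u$) has no neighbour in $S$, precisely because the edge $uv$ is no longer present — the reason you already correctly identified in the preceding paragraph.
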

\begin{proof} If $G$ has a universal vertex $v$ then $\gamma _t (G)= 2$. If we subdivide a edge $e=vx$ by a subdivision vertex $w$, then $D=\{ v,w\} $ is a minimum total dominating set of $G_e$. If $e=yz $ with $v\notin \{ y,z\} $, then $D=\{ v,y\}$ is a minimum total dominating set of $G_e$. So, $msd_{\gamma _t}(G) > 1$. For $e=vx$, $\gamma _t(G_{e,2})=3$. Therefore,  msd$_{\gamma_t}(G)=2.$

\end{proof}

\begin{wn}
For a complete graph $K_n,$ a star $K_{1,n-1}$ with $n\geq 3$ and for a wheel $W_n$ with $n\geq 4$, we have
$$msd_{\gamma_t}(K_n)=msd_{\gamma_t}(K_{1,n-1})= msd_{\gamma_t}(W_n)=2.$$
\end{wn}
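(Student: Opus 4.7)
The plan is essentially immediate: this corollary is just a direct application of Lemma~\ref{l1}, so the main task is to verify that each of the three listed graphs satisfies the hypothesis of that lemma, namely that it has order at least $3$ and contains a universal vertex.

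First I would observe that in $K_n$ with $n\geq 3$ every vertex is adjacent to all others, so any vertex is universal and $n\geq 3$; hence Lemma~\ref{l1} gives $msd_{\gamma_t}(K_n)=2$. Next, for the star $K_{1,n-1}$ with $n\geq 3$, the center vertex is adjacent to all $n-1$ leaves and the order is $n\geq 3$, so again the lemma applies and $msd_{\gamma_t}(K_{1,n-1})=2$. Finally, for the wheel $W_n=K_1+C_{n-1}$ with $n\geq 4$, the hub vertex is adjacent to every vertex of the rim, hence is universal, and the order $n\geq 4\geq 3$ meets the lemma's requirement; so $msd_{\gamma_t}(W_n)=2$.

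There is no real obstacle here, since the work was already done in Lemma~\ref{l1}. The only thing worth being careful about is confirming the conventions (in particular that $W_n$ denotes the wheel on $n$ vertices, namely $K_1+C_{n-1}$, and that the lower bounds on $n$ in the statement guarantee both order $\geq 3$ and the existence of the relevant universal vertex). Once these conventions are fixed, the three equalities follow in a single line each by invoking Lemma~\ref{l1}.
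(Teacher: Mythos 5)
Your proposal is correct and matches the paper's (implicit) argument exactly: the corollary is stated as an immediate consequence of Lemma~\ref{l1}, and verifying that each of $K_n$, $K_{1,n-1}$ and $W_n$ has a universal vertex and order at least $3$ is all that is required. Your reading of the conventions (in particular $W_n=K_1+C_{n-1}$ so that $n\geq 4$ guarantees a genuine wheel with a universal hub) is consistent with the paper.
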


In \cite{haynes2} it has been shown that
for a cycle $C_n$ and a path $P_n$, $n\geq 3$, we have
\begin{displaymath}
sd_{\gamma_t}(C_n)=sd_{\gamma_t}(P_n)=\left\{ \begin{array}{ll}
3 & \textrm{if $n\equiv 2\pmod 4$}\\
2 & \textrm{if $n\equiv 3\pmod 4$}\\
1 & \textrm{otherwise.}
\end{array} \right.
\end{displaymath}

Since the cycle (path) with a subdivided edge $k$ times is isomorphic to the cycle (path) with subdivided $k$ edges once, we immediately obtain the following.

\begin{wn}
For a cycle $C_n$ and a path $P_n$, $n\geq 3$, we have
\begin{displaymath}
msd_{\gamma_t}(C_n)= msd_{\gamma_t}(P_n)=\left\{ \begin{array}{ll}
3 & \textrm{if $n\equiv 2\pmod 4$}\\
2 & \textrm{if $n\equiv 3\pmod 4$}\\
1 & \textrm{otherwise.}
\end{array} \right.
\end{displaymath}
\end{wn}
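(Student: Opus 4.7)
The plan is to reduce the claim to the known formula for $\mathrm{sd}_{\gamma_t}$ by showing that on paths and cycles, the multisubdivision number and the subdivision number coincide. The reason is purely structural: subdividing one edge of $P_n$ a total of $k$ times produces the path $P_{n+k}$, and subdividing any $k$ distinct edges of $P_n$ once each also produces $P_{n+k}$; the same holds for $C_n$, both operations giving $C_{n+k}$. So the graph resulting from either type of subdivision depends only on how many new subdivision vertices are introduced, not on their distribution among the edges.

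From this I would argue the two inequalities. For $\mathrm{msd}_{\gamma_t}\ge \mathrm{sd}_{\gamma_t}$, if subdividing a single edge $e$ of $P_n$ (respectively $C_n$) a total of $k$ times yields an increase in $\gamma_t$, then the same graph $P_{n+k}$ (respectively $C_{n+k}$) arises from subdividing any $k$ edges once each, so $\mathrm{sd}_{\gamma_t}\le k$. Conversely, for $\mathrm{sd}_{\gamma_t}\ge \mathrm{msd}_{\gamma_t}$, if subdividing some $k$ edges once each increases $\gamma_t$, then the resulting graph is isomorphic to the one obtained by subdividing a single chosen edge $k$ times, witnessing $\mathrm{msd}_{\gamma_t}(e)\le k$ and hence $\mathrm{msd}_{\gamma_t}\le k$.

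Having established $\mathrm{msd}_{\gamma_t}(P_n)=\mathrm{sd}_{\gamma_t}(P_n)$ and $\mathrm{msd}_{\gamma_t}(C_n)=\mathrm{sd}_{\gamma_t}(C_n)$, the corollary follows immediately from the Haynes--Hedetniemi--van der Merwe formula quoted in the excerpt. There is no substantive obstacle here; the whole argument rests on the single structural isomorphism $P_n$ (or $C_n$) with $k$ subdivisions ${}\cong P_{n+k}$ (or $C_{n+k}$), which the authors already flagged in the sentence preceding the statement.
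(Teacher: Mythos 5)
Your proposal is correct and follows exactly the paper's argument: the authors justify the corollary with the single observation that a path (cycle) with one edge subdivided $k$ times is isomorphic to the same path (cycle) with $k$ edges each subdivided once, and then invoke the known formula for $\mathrm{sd}_{\gamma_t}(P_n)$ and $\mathrm{sd}_{\gamma_t}(C_n)$. You have merely spelled out the two inequalities implicit in that isomorphism, so there is nothing to add.
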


The main result of this section is the next theorem.

\begin{thm}\label{obs1}
For a connected graph $G$, $msd_{\gamma_t}(G)\leq 3.$
\end{thm}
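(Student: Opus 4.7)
My plan is to prove the sharper statement that for \emph{every} edge $e=uv\in E(G)$ we already have $\gamma_t(G_{e,3})>\gamma_t(G)$; from this, $msd_{\gamma_t}(uv)\le 3$ for every edge, so $msd_{\gamma_t}(G)\le 3$. I would argue by contradiction: fix $e=uv$, label the subdivision path as $u,x_1,x_2,x_3,v$ in $G_{e,3}$, assume there is a total dominating set $D'$ of $G_{e,3}$ with $|D'|\le\gamma_t(G)$, and build from $D'$ a TDS of $G$ of size strictly smaller than $\gamma_t(G)$.

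The whole argument rests on one local observation. Split $D'=D'_V\cup D'_X$ with $D'_V=D'\cap V(G)$ and $D'_X=D'\cap\{x_1,x_2,x_3\}$. Since the subdivision vertices are only adjacent to one another and to $u,v$, for every $w\in V(G)\setminus\{u,v\}$ we have $N_{G_{e,3}}(w)=N_G(w)$; so any $D'$-neighbour of such a $w$ automatically lies in $D'_V$. In particular, every $w\in D'_V\setminus\{u,v\}$ is already totally dominated inside $D'_V$. This is what will let me strip the $x_i$'s from $D'$ when passing back to $G$, provided I compensate for the possible loss of dominators at $u$ and $v$.

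Then I would run a case analysis on $D'_X$. Total domination of $x_2$ forces $|D'_X|\ge 1$. If $|D'_X|=1$ (necessarily $\{x_1\}$ or $\{x_3\}$, since $x_2$ alone would be isolated in $D'$) or $D'_X=\{x_1,x_3\}$, then total domination of $x_1$ and $x_3$ forces both $u,v\in D'$; the set $D'_V$ is then already a TDS of $G$, because $uv\in E(G)$ realises the mutual total domination of $u$ and $v$, and $|D'_V|\le|D'|-1$. If $D'_X=\{x_1,x_2,x_3\}$, then $D'_V\cup\{u,v\}$ is a TDS of $G$ of size at most $|D'|-1$ by the same structural observation. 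Each of these cases delivers the contradiction by a one-line size count.

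The only delicate configuration is $D'_X=\{x_1,x_2\}$ (and symmetrically $\{x_2,x_3\}$). Here $x_3\notin D'$, so in $G_{e,3}$ the vertex $v$ can be dominated only by some $y\in (N_G(v)\setminus\{u\})\cap D'_V$; the existence of this external $y$ is \emph{forced} by $D'$ being a TDS. I would then take $D^\circ=D'_V\cup\{v\}$: the vertex $u$ is dominated in $G$ by $v$, the vertex $v$ is totally dominated by $y\in D^\circ$, every other $w\in V(G)$ inherits its $D'_V$-dominator by the structural observation, and $|D^\circ|\le|D'|-1<\gamma_t(G)$, the contradiction. The main obstacle, and essentially the only nontrivial point in the whole argument, is precisely this sub-case: one must notice that dropping the two $x_i$'s from $D'$ is paid for by adding the single vertex $v$, and that $v$'s own total-domination requirement is already met by the external dominator $y$ that the hypothesis $x_3\notin D'$ guarantees.
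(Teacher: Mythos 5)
Your proposal is correct and follows essentially the same route as the paper: a case analysis on which subdivision vertices of $G_{e,3}$ lie in a minimum total dominating set, converting it in each case to a strictly smaller total dominating set of $G$ (including the same key move in the delicate sub-case, where the absence of $x_3$ from the set forces an external dominator of $v$ so that adding $v$ alone compensates for deleting $x_1,x_2$). The only differences are cosmetic: you phrase it as a contradiction and organize the cases by the exact subset $D'\cap\{x_1,x_2,x_3\}$ rather than by its cardinality.
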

\begin{proof}
We subdivide an edge $e=uv\in E(G)$ with subdivision vertices $x_1,x_2,x_3$. Let $D^*$ be a minimun total dominating set of $G_e,3$. Since $D^*$ is dominating, it contains at least one subdivision vertex. We considerer the next three cases. 
\begin{enumerate}
\item  If $|\{x_1,x_2,x_3\}\cap D^*|=1$, then $u,v\in D^*$ and $D=D^*-\{x_1,x_2,x_3\}$ is a total dominating set of $G$ with $|D|<|D^*|$.

\item Suppose $|\{x_1,x_2,x_3\}\cap D^*|=2$. If  $u\in D^*$ or $v\in D^*$, 
then $D=(D^*-\{x_1,x_2,x_3\})\cup\{u,v\}$ is a total dominating set of $G$ with $|D|<|D^*|$. If $u \not \in D^*$ and 
$v\notin D^*$, 
then the two subdivision vertices in $D^*$ must be adjacent, without loss of generality suppose $x_1,x_2\in D^*$. Then $v$ is dominated by a vertex $z\in D^*$, so $D=D^*-\{x_1,x_2\}\cup\{v\}$ is a total dominating set of $G$ with $|D|<|D^*|$.

\item If $\{x_1,x_2,x_3\}\subset D^*$, then $D=(D^*-\{x_1,x_2,x_3\})\cup\{u,v\}$ is a total dominating set of $G$ with $|D|<|D^*|$.

\end{enumerate}
In any case, we prove that $\gamma_t(G)\leq |D|< |D^*|=\gamma_t(G_{uv,3})$. Which implies that $msd_{\gamma_t}(G)\leq 3.$
\end{proof}

\begin{figure}[htp]
\begin{picture}( 220,120)(-130,0)

\drawline(0,0)(150,0)(75,130)(0,0)
\drawline(60,40)(90,40)(75,65)(60,40)
\drawline(0,0)(60,40)
\drawline(150,0)(90,40)
\drawline(75,130)(75,65)
\put(0,0){\circle*{5}}
\put(150,0){\circle*{5}}
\put(75,130){\circle*{5}}
\put(60,40){\circle*{5}}
\put(90,40){\circle*{5}}
\put(75,65){\circle*{5}}

\put(75,85){\circle*{5}}
\put(75,105){\circle*{5}}
\put(20,13){\circle*{5}}
\put(130,13){\circle*{5}}
\put(40,27){\circle*{5}}
\put(110,27){\circle*{5}}

\end{picture}\caption{Graph $G^*$}\label{gwiazdka}\end{figure}

In \cite{haynes1} it has been proved that for any positive integer $k,$ there exists a graph $G$ such that sd$_{\gamma_t}(G)=k.$ Therefore by the above Theorem, in general, the difference between sd$_{\gamma_t}(G)$ and msd$_{\gamma_t}(G)$ cannot be bounded by any integer.
For small values of sd$_{\gamma_t}$ ($2\leq \textrm{sd}_{\gamma_t}(G)\leq 3$), msd$_{\gamma_t}$ and sd$_{\gamma_t}$ are incomparable. For example, for a complete graph $K_4$ we have msd$_{\gamma_t}(K_4)=2$, sd$_{\gamma_t}(K_4)=3.$ But for the graph $G^*,$ shown in Figure \ref{gwiazdka}, we have msd$_{\gamma_t}(G^*)=3$ and sd$_{\gamma_t}(G^*)=2.$

\section{Total domination multisubdivision number of trees}
Now we consider the total domination multisubdivision number of trees. The main result of this section is the following theorem.

\begin{thm}\label{main}
For a tree $T$  with $n(T)\geq 3$ we have
\[
sd_{\gamma_t}(T)=msd_{\gamma_t}(T).
\]
\end{thm}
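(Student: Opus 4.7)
The plan is to prove both inequalities $msd_{\gamma_t}(T)\le sd_{\gamma_t}(T)$ and $sd_{\gamma_t}(T)\le msd_{\gamma_t}(T)$. Since $msd_{\gamma_t}(T)\le 3$ by Theorem \ref{obs1}, establishing equality will also show $sd_{\gamma_t}(T)\le 3$ for trees as a by-product. The remark preceding Observation \ref{property} already gives $sd_{\gamma_t}(T)=1\iff msd_{\gamma_t}(T)=1$, so I would begin by assuming both parameters are at least $2$ and work case by case on the value of $msd_{\gamma_t}(T)\in\{2,3\}$.

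For the first inequality $msd_{\gamma_t}(T)\le sd_{\gamma_t}(T)$, let $F=\{e_1,\dots,e_k\}$ be a set of $k=sd_{\gamma_t}(T)$ edges whose simultaneous single subdivision increases $\gamma_t$, and denote the resulting graph by $T^*$. Picking a $\gamma_t(T^*)$-set $D^*$ without leaves (Observation \ref{property}), I would argue that some $e_i\in F$ can be singled out for which subdividing $e_i$ alone $k$ times already forces $\gamma_t$ up. The key tool is the bridge property: removing $e_i$ splits $T$ into two components, and a careful local rewriting of $D^*$ near the subdivision vertex of $e_i$, combined with the minimality of $k$, should yield a total dominating set of $T_{e_i,k}$ of cardinality strictly greater than $\gamma_t(T)$.

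For the converse $sd_{\gamma_t}(T)\le msd_{\gamma_t}(T)$, let $uv$ realise $msd_{\gamma_t}(uv)=k=msd_{\gamma_t}(T)$ and fix a $\gamma_t(T_{uv,k})$-set $D^*$ avoiding leaves. Mimicking the case analysis in the proof of Theorem \ref{obs1}, I would study $D^*\cap\{x_1,\dots,x_k\}$; the hypothesis that subdividing $uv$ fewer than $k$ times does not raise $\gamma_t$ severely restricts the distribution of $D^*$ along the path $u,x_1,\dots,x_k,v$. From this I aim to exhibit $k$ edges of $T$ --- for instance the edge $uv$ together with an edge incident with $u$ or $v$, or two edges on nearby branches --- whose simultaneous subdivision also increases $\gamma_t(T)$.

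The main obstacle I foresee is the second direction in the case $k=2$: converting ``one edge subdivided twice'' into ``two distinct edges of $T$ subdivided once''. These two constructions produce non-isomorphic graphs, so one cannot transfer optimal total dominating sets verbatim. My plan here is to combine the assumption $msd_{\gamma_t}(uv)=2$ with Lemma \ref{o2} applied to $T$: since $sd_{\gamma_t}(T)\ge 2$, every end vertex of $T$ lies in some $\gamma_t(T)$-set and the two endpoints of every inner edge cannot simultaneously avoid every $\gamma_t(T)$-set. These structural restrictions should constrain the tree enough to name two suitable edges of $T$ explicitly whose single subdivisions together force $\gamma_t$ to grow.
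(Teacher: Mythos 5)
Your reduction to the cases $k\in\{2,3\}$ and the observation that $sd_{\gamma_t}=1\iff msd_{\gamma_t}=1$ match the paper's starting point, and you correctly note that the only real content is the interaction between the values $2$ and $3$ (equivalently, that $sd_{\gamma_t}(T)=3\iff msd_{\gamma_t}(T)=3$, since $msd_{\gamma_t}\leq 3$ always and $sd_{\gamma_t}\leq 3$ for trees). But both of your crucial steps are left as placeholders exactly where the difficulty lies, and as stated they contain gaps. For $msd_{\gamma_t}(T)\leq sd_{\gamma_t}(T)$ with $sd_{\gamma_t}(T)=2$, you assume that one of the two witness edges $e_1,e_2$ can itself be subdivided twice to increase $\gamma_t$; the theorem only guarantees that \emph{some} edge works, and nothing in your sketch justifies that it is one of the $e_i$. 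Your proposed mechanism is also logically inverted: you start from a minimum total dominating set $D^*$ of the graph $T^*$ in which both edges are subdivided once, and claim to produce ``a total dominating set of $T_{e_i,k}$ of cardinality strictly greater than $\gamma_t(T)$'' --- exhibiting one large total dominating set proves nothing about $\gamma_t(T_{e_i,k})$; you would instead need to take an arbitrary minimum total dominating set of $T_{e_i,k}$ and convert it into a strictly smaller total dominating set of $T$, and $D^*$ lives in the wrong graph for that. For the converse with $msd_{\gamma_t}(uv)=2$, you acknowledge the obstacle but offer only ``these structural restrictions should constrain the tree enough to name two suitable edges explicitly,'' which is not an argument.

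The paper's route shows why these gaps are not easily filled by local set-rewriting: it proves $sd_{\gamma_t}(T)=3\iff msd_{\gamma_t}(T)=3$ using the constructive characterization (from Haynes, Henning and Hopkins) of the family $\mathcal{F}$ of trees with $sd_{\gamma_t}=3$. One direction (Lemma~\ref{t1}) is an induction over the construction of $T\in\mathcal{F}$, requiring a new commuting lemma (Lemma~\ref{change}) to ensure the edge being subdivided can be pushed into a smaller member of $\mathcal{F}$; the other direction (Lemma~\ref{t2}) is a structural induction on a longest path (Lemma~\ref{riti}) showing that any tree with $msd_{\gamma_t}=3$ arises from a smaller such tree by one of the operations $O_1$, $O_2$, hence lies in $\mathcal{F}$. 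Your plan avoids this machinery entirely, which would be a genuinely different and more elementary proof if it worked, but as it stands the two key claims are unproved and at least one of them (that a witness edge for $sd_{\gamma_t}=2$ also witnesses $msd_{\gamma_t}\leq 2$) is a strictly stronger assertion than the theorem itself, with no evidence offered that it holds.
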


It was shown by Haynes \emph{et al.} in \cite{haynes} that the total domination subdivision number of a tree is 1, 2 or 3. The class of trees $T$ with sd$_{\gamma}(T)=3$ was characterized in \cite{haynes2}.

Since sd$_{\gamma_t}(G)=1$ if and only if msd$_{\gamma_t}(G)=1$, in order to prove Theorem ~\ref{main} it suffices to show that for any tree $T$ of order at least three,
\begin{equation*}\label{e1}
sd_{\gamma_t}(T)=3 \textrm{ if and only if } msd_{\gamma_t}(T)=3.
\end{equation*}

\subsection{Trees with the total domination multisubdivision number equal to 3}
The following constructive characterization of the family $\mathcal{F}$ of labeled trees $T$ with sd$_{\gamma_t}(T)=3$ was given in~\cite{haynes2}. The label of a vertex $v$ is also called the status of $v$ and is denoted by $sta(v)$.

 Let $\mathcal{F}$ be the family of labelled trees such that:
 \begin{itemize}
\item contains $P_6$ where the two leaves have status $C,$ the two support vertices have status $B,$ and the two central vertices have
status $A;$ and
\item is closed under the two operations $O_1$ and $O_2$, which extend the tree $T$ by attaching a path to a vertex $y \in V (T )$.
\begin{enumerate}
\item Operation $O_1.$ Assume $sta(y) = A.$ Then add a path $(x,w,v)$ and the edge $xy.$ Let $sta(x) = A,$ $sta(w) = B,$ and $sta(v) = C.$
\item Operation $O_2.$ Assume $sta(y) \in \{B,C\}.$ Then add a path $(x,w,v,u)$ and the edge $xy.$ Let $sta(x)=sta(w)=A, sta(v)=B$ and $sta(u)=C.$
\end{enumerate}
\end{itemize}

In \cite{haynes2} the following observation and theorem has been proved.

\begin{obs} \label{orodzina} If $T\in \mathcal{F},$ then $\mathcal{B}\cup \mathcal{C}$ is a minimum total dominating set of $T,$ where $\mathcal{B}$ and $\mathcal{C}$ are sets of vertices with status $B$ and $C,$ respectively. 
\end{obs}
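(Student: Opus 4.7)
The natural approach is induction on the number of operations $O_1, O_2$ used to build $T$ from the base tree $P_6 \in \mathcal{F}$. For the base case, I would verify directly: in $P_6$ labeled $C, B, A, A, B, C$, the set $\mathcal{B} \cup \mathcal{C}$ consists of the two leaves and their two support vertices, a set of size $4$. Total domination is immediate (each $B$-vertex is dominated by its adjacent $C$-leaf and conversely, and each central $A$-vertex is dominated by its $B$-neighbor), and since $\gamma_t(P_6) = 4$ is known, minimality follows.

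For the inductive step, suppose $T \in \mathcal{F}$ satisfies the claim and let $T'$ be obtained by $O_1$ or $O_2$ at a vertex $y$. First I would show that $\mathcal{B}' \cup \mathcal{C}'$ is a total dominating set of $T'$. In both operations, the newly added $B$-vertex and $C$-vertex are adjacent, so each dominates the other; the newly added $A$-vertices are dominated by the new $B$-vertex; and all old vertices retain whichever neighbour in $\mathcal{B} \cup \mathcal{C}$ dominated them in $T$. Hence $|\mathcal{B}' \cup \mathcal{C}'| = |\mathcal{B} \cup \mathcal{C}| + 2$ is an upper bound for $\gamma_t(T')$.

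The substantive part is the matching lower bound $\gamma_t(T') \geq \gamma_t(T) + 2$. Given a $\gamma_t(T')$-set $S'$, I would argue that $S'$ must contain at least two of the three (in $O_1$) or four (in $O_2$) newly added vertices: the new leaf's support must lie in $S'$ to totally dominate the leaf, and that support itself needs a neighbour in $S'$, which in turn forces another new vertex in. Removing these new vertices and, if necessary, adjusting by swapping in at most one vertex of $T$ to keep $y$ dominated, should yield a total dominating set of $T$ of size $|S'| - 2$. Combined with the inductive hypothesis $\gamma_t(T) = |\mathcal{B} \cup \mathcal{C}|$, this gives $|S'| \geq |\mathcal{B}' \cup \mathcal{C}'|$.

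The main obstacle is the bookkeeping in the reduction step: when the new vertex of status $A$ adjacent to $y$ happens to be the vertex in $S'$ dominating $y$, one must certify that $y$ has a replacement dominator inside $T$. This is where the status constraints pay off — in $O_1$, $y$ has status $A$ in $T$ and is already dominated by some $B$-neighbour in $\mathcal{B} \cup \mathcal{C}$; in $O_2$, the status $B$ or $C$ of $y$ forces a specific neighbourhood structure that guarantees a valid replacement. A case analysis following the two operations should make the substitution explicit, closing the induction.
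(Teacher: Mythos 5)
First, a point of comparison: the paper does not actually prove this observation --- it is quoted from reference \cite{haynes2} (Haynes, Henning and Hopkins, \emph{Total domination subdivision numbers of trees}), so there is no in-paper proof to measure you against. Judged on its own, your inductive strategy is the natural one, and the upper-bound direction ($\mathcal{B}\cup\mathcal{C}$ is a total dominating set of size $|\mathcal{B}\cup\mathcal{C}|$, which grows by exactly $2$ under each operation) is essentially correct, with one small slip: in $O_2$ the new $A$-vertex $x$ is \emph{not} adjacent to the new $B$-vertex $v$; it is dominated by $y$, whose status $B$ or $C$ is exactly what the hypothesis of $O_2$ guarantees.

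The genuine gap is in the lower bound $\gamma_t(S(T))\geq\gamma_t(T)+2$ for operation $O_1$. Write the attached path as $y-x-w-v$ with $v$ a leaf. A minimum total dominating set $S'$ of the new tree must contain $w$ and one of $x,v$, but it may contain exactly the two new vertices $x$ and $w$ (and not $v$), with $x$ serving as the \emph{only} dominator of $y$ in $S'$. Your repair --- delete the new vertices and swap in one neighbour of $y$ --- then produces a total dominating set of $T$ of size $|S'|-1$, which yields only $\gamma_t(S(T))\geq\gamma_t(T)+1$ and does not close the induction. This case cannot be dismissed on general grounds: attaching a pendant path $x-w-v$ to the centre $y$ of $P_3=(a,y,c)$ raises $\gamma_t$ from $2$ to $3$ only, realized by $\{y,x,w\}$, and this is precisely the configuration above. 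Hence the inductive hypothesis ``$\mathcal{B}\cup\mathcal{C}$ is a minimum total dominating set of $T$'' is too weak by itself; one must carry a stronger invariant about the labelling (properties of the status-$A$ vertices, in particular of the attachment vertex $y$ and of which vertices can serve as its sole dominator), which is how the cited source proceeds. Your closing remark that ``the status constraints pay off'' points in the right direction, but the justification you offer --- that $y$ has a $B$-neighbour inside $\mathcal{B}\cup\mathcal{C}$ --- says nothing about the arbitrary minimum set $S'$ you are reducing, so the key step remains unproven. (For $O_2$ your argument does go through, since whenever $x\in S'$ one checks that at least three of the four new vertices lie in $S'$, so the one-vertex repair still nets a saving of two.)
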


\begin{thm}  \label{rodziny}
 For a tree $T$, sd$_{\gamma_t}(T)=3$ if and only if $T \in \mathcal{F}.$
\end{thm}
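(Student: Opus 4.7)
The plan is to prove both directions of the biconditional by induction, using structural induction on the construction for the easier direction and induction on $|V(T)|$ for the harder direction.

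For the \emph{if} direction, I would proceed by structural induction on the number of operations used to build $T \in \mathcal{F}$. The base case $T = P_6$ follows from the path formula stated in the preceding corollary: since $6 \equiv 2 \pmod 4$, we already have $\mathrm{sd}_{\gamma_t}(P_6) = 3$. For the inductive step, assume $T$ arises from some $T' \in \mathcal{F}$ by $O_1$ or $O_2$ at a vertex $y$. By Observation~\ref{orodzina}, $\mathcal{B}\cup \mathcal{C}$ is a minimum total dominating set of $T$, so $\gamma_t(T) = |\mathcal{B}| + |\mathcal{C}|$, a quantity that grows by exactly $2$ with each operation. To show $\mathrm{sd}_{\gamma_t}(T) \geq 3$, I would exhibit, for any choice of one or two edges in $T$, a total dominating set of the subdivided graph of size $\gamma_t(T)$; the case analysis splits according to whether the subdivided edges lie in the newly attached path, are incident to $y$, or lie entirely in $T'$, with the last case handed to the inductive hypothesis. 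The matching upper bound $\mathrm{sd}_{\gamma_t}(T) \leq 3$ is witnessed by the three edges of a pendant $C$--$B$--$A$--$A$ subpath, whose simultaneous subdivision forces $\gamma_t$ strictly up by a short private-neighbour argument.

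For the \emph{only if} direction, I would induct on $|V(T)|$. Given $T\neq P_6$ with $\mathrm{sd}_{\gamma_t}(T) = 3$, take a longest path $v_0, v_1, \ldots, v_d$. Lemma~\ref{o2} together with the standard fact that adjacent support vertices force $\mathrm{sd}_{\gamma_t} = 1$ constrains the structure at the end: $v_1$ is a support vertex with unique leaf $v_0$, and $v_2$ is not a support vertex. Splitting into cases on the role that $v_2$ and $v_3$ play in minimum total dominating sets of $T$, I would identify a pendant path $(v_0,v_1,v_2)$ (the candidate for undoing $O_1$, when $v_3$ plays the role of an $A$-vertex) or $(v_0,v_1,v_2,v_3)$ (the candidate for undoing $O_2$, when $v_4$ plays the role of a $B$- or $C$-vertex), and set $T'$ to be $T$ with this path removed. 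Then apply the inductive hypothesis to $T'$ and transfer the labelling back to $T$ via the matching operation.

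The principal obstacle is the only-if direction, and specifically the preservation step $\mathrm{sd}_{\gamma_t}(T') = 3$. The inequality $\mathrm{sd}_{\gamma_t}(T') \geq 3$ I would prove contrapositively: any subdivision of at most two edges in $T'$ that raises $\gamma_t(T')$ ought to lift, together with at most one additional subdivision inside the attached pendant path, to a subdivision of at most three edges of $T$ that raises $\gamma_t(T)$, contradicting $\mathrm{sd}_{\gamma_t}(T) = 3$; executing the lift rigorously requires a careful comparison of $\gamma_t(T)$ with $\gamma_t(T')$ and of their minimum total dominating sets, and is the longest part of the argument. A secondary obstacle is verifying that the reconstructed labelling assigns status $A$ to $y$ exactly when the removed pendant has length $3$ (undoing $O_1$) and status $B$ or $C$ when the removed pendant has length $4$ (undoing $O_2$); this dichotomy is the one built into the definition of $\mathcal{F}$, and should follow from the characterization of minimum total dominating sets via Observation~\ref{orodzina}.
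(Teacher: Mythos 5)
The paper does not actually prove Theorem~\ref{rodziny}: it is quoted from \cite{haynes2} and used as a black box, the authors' own contribution being the analogous statement for the multisubdivision number (Lemmas~\ref{t1} and~\ref{t2}). So there is no in-paper proof to compare against and your proposal must stand on its own. Its architecture --- structural induction over the construction of $\mathcal{F}$ for the ``if'' direction, induction on $|V(T)|$ with a longest path and a pendant-path deletion for the ``only if'' direction --- is the standard one, and it closely mirrors both \cite{haynes2} and the paper's parallel treatment of $\mathrm{msd}_{\gamma_t}$ in Lemmas~\ref{riti}, \ref{t1} and~\ref{t2}.

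There is, however, a concrete logical error at exactly the step you single out as the principal obstacle. To show $\mathrm{sd}_{\gamma_t}(T')\geq 3$ you argue contrapositively that a set of at most two edges of $T'$ whose subdivision raises $\gamma_t(T')$ should lift, ``together with at most one additional subdivision inside the attached pendant path, to a subdivision of at most three edges of $T$ that raises $\gamma_t(T)$, contradicting $\mathrm{sd}_{\gamma_t}(T)=3$.'' But a set of \emph{three} edges whose subdivision raises $\gamma_t(T)$ does not contradict $\mathrm{sd}_{\gamma_t}(T)=3$; it is precisely what that equality guarantees to exist. The contradiction only materializes if the lifted set still has at most two edges, so the lift must not introduce any additional subdivision: what you need is that if subdividing the edges of some $E'\subseteq E(T')$ with $|E'|\leq 2$ (once each) raises $\gamma_t(T')$, then subdividing the \emph{same} edges in $T$ raises $\gamma_t(T)$, which rests on the identity $\gamma_t(T)=\gamma_t(T')+2$ and a decomposition of minimum total dominating sets of the subdivided trees, in the spirit of the bookkeeping done in Lemma~\ref{riti} and the proof of Lemma~\ref{t2}. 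Beyond this broken joint, several load-bearing steps are only promised rather than executed: the full one- and two-edge case analysis in the ``if'' direction, and the verification that the reattachment vertex really carries status $A$ (respectively $B$ or $C$) in the labelling of $T'$ supplied by the induction hypothesis, without which the final appeal to $O_1$ or $O_2$ is not licensed. As written the proposal is a reasonable plan containing one genuine logical gap, not yet a proof.
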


Operation $O_1$ and Operation $O_2$ will be called below the basic operations. If $S$ is a basic operation of type $O_1$ or $O_2$, denote by $V_S$ and $E_S$ the set of vertices and the set of edges appeared as a result of using the operation $S$.

\begin{obs}\label{commute}
Let $T \in \mathcal{F}$ and $S$, $S'$ be two basic operations. Consider $S'(S(T))$, if the path added by $S'$ is attached to a vertex  $v\in V(T)$, then $S'(S(T))=S(S'(T))$.
\end{obs}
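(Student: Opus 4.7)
The plan is to verify that the two operations act on disjoint sets of newly introduced vertices and edges and that neither of them alters the status of any vertex that already belonged to the tree on which it is performed, so the order of application is immaterial.

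First, I would record a simple invariant of the basic operations: both $O_1$ and $O_2$ introduce only fresh vertices (the path they attach) together with one extra edge joining the new path to a pre-existing vertex, and they assign a status only to the newly introduced vertices. In particular, neither operation modifies the status of any vertex present in the tree on which it is applied.

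Next, I would use the hypothesis $v\in V(T)$ (and the fact that the attachment vertex $y$ of $S$ is, by the definition of a basic operation applied to $T$, also a vertex of $T$). Since $v\in V(T)$ and $V_S\cap V(T)=\emptyset$, the vertex $v$ has the same status in $T$ and in $S(T)$, so the precondition on $\mathrm{sta}(v)$ needed to apply $S'$ is satisfied in both. Symmetrically, $y\in V(T)$ is unaffected by $S'$, hence $S$ can be applied after $S'$ as well. Thus both $S'(S(T))$ and $S(S'(T))$ are well-defined.

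Finally, I would compare the two labelled trees. In both cases the vertex set is $V(T)\sqcup V_S\sqcup V_{S'}$ and the edge set is $E(T)\cup E_S\cup E_{S'}$: the three pieces are pairwise disjoint because $V_S\cap V_{S'}=\emptyset$ and every edge in $E_S$ (resp.\ $E_{S'}$) is incident with a vertex of $V_S$ (resp.\ $V_{S'}$). The statuses of vertices of $V(T)$ are inherited from $T$ in both orderings, while the statuses assigned to vertices of $V_S$ and of $V_{S'}$ depend only on $S$ and $S'$ respectively. Hence $S'(S(T))=S(S'(T))$ as labelled trees. The only point that actually requires the hypothesis is the non-interference of $S$ with the attachment vertex of $S'$; the assumption $v\in V(T)$ is precisely what guarantees this, and once it is noted, there is no real obstacle in the argument.
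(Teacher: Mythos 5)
Your argument is correct: the paper states this as an Observation with no proof at all, treating it as self-evident, and your verification (disjointness of $V_S$, $V_{S'}$ and $E_S$, $E_{S'}$, preservation of the statuses of the vertices of $V(T)$ by both operations, and the hypothesis $v\in V(T)$ guaranteeing that the attachment vertices of $S$ and $S'$ are unaffected by the other operation) is exactly the justification the authors evidently had in mind. Nothing is missing.
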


\begin{lem}\label{change}
Let $T \in \mathcal{F}$ with $|V(T)|>6$. Then there exist  $T',T'' \in \mathcal{F}$ and basic operations $S'$, $S''$ such that $T=S'(T')=S''(T'')$ and $V_{S'}\cap V_{S''}=\emptyset$. Additionally,  $E_{S'}\cap E_{S''}=\emptyset$.
\end{lem}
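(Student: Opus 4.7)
The plan is to proceed by induction on $|V(T)|$, with Observation~\ref{commute} powering the inductive step. The key idea is that when we write $T=S(T_0)$ by undoing the last basic operation $S$, any two vertex-disjoint decompositions of $T_0 \in \mathcal{F}$ can be lifted back to $T$ by commuting $S$ past one of them.

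For the base of the induction, the smallest members of $\mathcal{F}$ with $|V(T)|>6$ have $|V(T)| \in \{9,10\}$, and each is $T = S_0(P_6)$ for a single basic operation $S_0$. The argument in these cases exhibits a second $P_6$ core inside $T$ and a matching second basic operation accounting for the complementary vertices. In the $9$-vertex case one pairs one outer $B,C$-arm of the original $P_6$ with the newly added $A,B,C$-arm through the central status-$A$ vertex serving as the attachment point, making the remaining outer arm the pendant of a second $O_1$ operation at the same vertex. The $10$-vertex case, where $S_0=O_2$ is attached at a status-$B$ or status-$C$ vertex of $P_6$, is handled by the analogous re-pairing; the path subcase $T \cong P_{10}$ is the most delicate, because its $\mathcal{F}$-labeling has to be re-read along the alternative $P_6$ core starting from the opposite leaf.

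For the inductive step, suppose $|V(T)| \geq 12$ and write $T = S(T_0)$ with $T_0 \in \mathcal{F}$. Then $|V(T_0)| \geq 8 > 6$, so the induction hypothesis applied to $T_0$ yields operations $S_1, S_2$ and trees $T_1, T_2 \in \mathcal{F}$ with
\[
T_0 = S_1(T_1) = S_2(T_2), \qquad V_{S_1} \cap V_{S_2} = \emptyset.
\]
Let $y_S \in V(T_0)$ be the vertex at which $S$ attaches. Because $V_{S_1}$ and $V_{S_2}$ are disjoint, $y_S$ misses at least one of them; after interchanging indices assume $y_S \notin V_{S_1}$, so $y_S \in V(T_1)$. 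Observation~\ref{commute} now gives
\[
T = S(S_1(T_1)) = S_1(S(T_1)),
\]
and setting $T'=T_0$, $S'=S$, $T''=S(T_1) \in \mathcal{F}$, and $S''=S_1$ produces the two required decompositions $T = S'(T') = S''(T'')$. The set $V_{S'}=V_S$ lies in $V(T) \setminus V(T_0)$ while $V_{S''}=V_{S_1} \subseteq V(T_0)$, so $V_{S'} \cap V_{S''} = \emptyset$; the edge disjointness $E_{S'}\cap E_{S''}=\emptyset$ then follows because each edge of $E_{S_i}$ is incident only to vertices of $V_{S_i}$ and its single attachment point, all of which are disjoint from $V_S$.

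The main obstacle is packaging the base case cleanly. Once the $|V(T)|=9$ and $|V(T)|=10$ trees are shown to admit two vertex-disjoint alternative constructions starting from $P_6$, the inductive step is a clean corollary of Observation~\ref{commute}, and both the vertex- and edge-disjointness conditions come out automatically because $V_S$ lies entirely outside $V(T_0)$. The finesse is in verifying the $P_{10}$ subcase, since the second $P_6$ core and its $\mathcal{F}$-labeling have to be constructed by re-reading the status pattern from the opposite end of the path.
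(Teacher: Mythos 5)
Your proposal is correct and follows essentially the same route as the paper: induction on $|V(T)|$ with explicit base cases at $9$ and $10$ vertices, and an inductive step that writes $T=S(\hat T)$, applies the hypothesis to $\hat T$, and uses Observation~\ref{commute} to commute $S$ past whichever of the two inherited operations avoids the attachment vertex, so that the final pair consists of the new operation (whose vertices lie outside $\hat T$) and one of the old ones. The paper fills in the base cases with the explicit second $P_6$ cores you sketch, but the structure and the disjointness argument are the same.
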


\begin{proof}
We use induction on $n,$ the number of vertices of $T$. Any  $T \in \mathcal{F}$ with $n>6$ has at least $9$ or $10$ vertices. For $n=9$, $T=S'(T')$ for $T'$ the path $(v_1,v_2,v_3,v_4,v_5,v_6)$ and $S'$ the operation of type $O_1$ adding path $(x,w,v)$   attached to vertex $v_3$; then $T=S''(T'')$ for $T''$ the path $(v_1,v_2,v_3,x,w,v)$ and $S''$ the operation of type $O_1$ adding path $(v_4,v_5,v_6)$  attached to vertex $v_3$. Obviously $V_{S'}\cap V_{S''}=\emptyset$.
For $n=10$ we have two cases, $T=S'(T')$ for $T'$ the path $(v_1,v_2,v_3,v_4,v_5,v_6)$ and $S'$ the operation of type $O_2$ adding path $(x,w,v,u)$ attached to vertex $v_5$; then $T=S''(T'')$ for $T''$ the path $(v_6,v_5,x,w,v,u)$ and $S''$ the operation of type $O_2$ adding path $(v_4,v_3,v_2,v_1)$  attached to vertex $v_5$. The second case is $T=S'(T')$ for $T'$ the path $(v_1,v_2,v_3,v_4,v_5,v_6)$ and $S'$ the operation of type $O_2$ adding path $(x,w,v,u)$   attached to vertex $v_6$; then $T=S''(T'')$ for $T''$ the path $(v_5,v_6,x,w,v,u)$ and $S''$ the operation of type $O_2$ adding path $(v_4,v_3,v_2,v_1)$  attached to vertex $v_5$. In both cases $V_{S'}\cap V_{S''}=\emptyset$.

Let $T \in \mathcal{F}$ with $n>10$, and suppose the result holds for every tree of $\mathcal{F}$ with less than $n$ vertices. By definition of the family $\mathcal{F}$ we know $T=S(\hat{T})$, for some $\hat{T}\in\mathcal{F}$ and a basic operation $S$. By induction hypothesis,  there exist  $T',T'' \in \mathcal{F}$ and basic operations $S'$, $S''$ such that
$\hat{T}=S'(T')=S''(T'')$,  $V_{S'}\cap V_{S''}=\emptyset$, and then $T=S(S'(T'))=S(S''(T''))$. The path added by $S$ is attached to a vertex $v\in \hat{T}$, and since   $V_{S'}\cap V_{S''}=\emptyset$, $v$ does not belong to both  $V_{S'}$ and $V_{S''}$, without loss of generality, $v\notin V_{S''}$, so by Observation \ref{commute}, $S(S''(T''))=S''(S(T''))$. Then $T=S(S'(T'))=S''(S(T''))$, with $V_{S}\cap V_{S''}=\emptyset$.
\end{proof}

With the above result we can prove the next lemma.

\begin{lem}
If $T$ is a tree with sd$_{\gamma_t}(T)=3$, then msd$_{\gamma_t}(T)=3$.
\label{t1}
\end{lem}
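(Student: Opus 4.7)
The plan is to combine Theorem~\ref{obs1}, Theorem~\ref{rodziny}, Lemma~\ref{change}, and Observation~\ref{orodzina} in an induction on $|V(T)|$. Since Theorem~\ref{obs1} already supplies $\text{msd}_{\gamma_t}(T)\le 3$, it suffices to show that for every edge $e$ of $T$ one has $\gamma_t(T_{e,2}) \le \gamma_t(T)$, which forces $\text{msd}_{\gamma_t}(T) > 2$. Observe also that by Theorem~\ref{rodziny} the hypothesis $\text{sd}_{\gamma_t}(T)=3$ is equivalent to $T\in\mathcal{F}$, so the induction can be driven along the constructive description of $\mathcal{F}$.

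The base case is $T = P_6$: the earlier corollary on paths, applied with $6\equiv 2 \pmod 4$, yields $\text{msd}_{\gamma_t}(P_6)=3$ directly. For the inductive step, assume $|V(T)| > 6$ and pick an arbitrary edge $e\in E(T)$. I would apply Lemma~\ref{change} to obtain two decompositions $T = S'(T') = S''(T'')$ with $E_{S'}\cap E_{S''}=\emptyset$. Because $e$ lies in at most one of these disjoint edge sets, without loss of generality $e\notin E_{S''}$, so $e\in E(T'')$. Set $R:=T''$ and $S:=S''$; since $S$ attaches a new path to a vertex $y\in V(R)$ which is distinct from the subdivision vertices of $e$, subdivision of $e$ and the operation $S$ commute, giving the graph-isomorphism identity $T_{e,2} = S(R_{e,2})$.

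From this the conclusion follows by chaining three bounds. First, the induction hypothesis applied to $R\in\mathcal{F}$, of strictly smaller order, gives $\gamma_t(R_{e,2}) \le \gamma_t(R)$. Second, extending any minimum total dominating set of a graph $H$ by the two adjacent middle vertices of the path attached by $S$ (namely $\{w,v\}$ in the notation of $O_1$ or $O_2$) produces a total dominating set of $S(H)$, so $\gamma_t(S(H))\le \gamma_t(H)+2$. Third, since both $T$ and $R$ lie in $\mathcal{F}$ and $S$ adds exactly one vertex of status $B$ and one of status $C$, Observation~\ref{orodzina} yields $\gamma_t(T) = |\mathcal{B}(T)\cup\mathcal{C}(T)| = |\mathcal{B}(R)\cup\mathcal{C}(R)| + 2 = \gamma_t(R)+2$. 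Combining, $\gamma_t(T_{e,2}) = \gamma_t(S(R_{e,2})) \le \gamma_t(R_{e,2}) + 2 \le \gamma_t(R) + 2 = \gamma_t(T)$, as required.

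The principal obstacle is to locate $e$ in the smaller tree of some decomposition $T=S(R)$, since the induction hypothesis gives no control over newly added edges; this is exactly the role of the disjointness clause $E_{S'}\cap E_{S''}=\emptyset$ in Lemma~\ref{change}, and once this reduction is in place the remainder of the proof is essentially bookkeeping about minimum total dominating sets governed by the status labels.
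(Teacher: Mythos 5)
Your proposal is correct and follows essentially the same route as the paper: induction on $|V(T)|$ with base case $P_6$, using Lemma~\ref{change} to relocate the edge $e$ into a smaller tree of $\mathcal{F}$, Observation~\ref{orodzina} to get $\gamma_t(T)=\gamma_t(R)+2$, and the extension of a minimum total dominating set of $R_{e,2}$ by the two middle vertices $\{w,v\}$ of the attached path to bound $\gamma_t(T_{e,2})$. The only cosmetic difference is that you phrase the conclusion as a chain of inequalities and make the commutation $T_{e,2}=S(R_{e,2})$ explicit, which the paper leaves implicit.
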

\begin{proof} From Theorem \ref{rodziny}, it is enough to prove that if $T\in \mathcal{F},$ then msd$_{\gamma_t}(T)=3$. We prove that for any edge $e$ of $T\in \mathcal{F},$ $\gamma_t(T_{e,2})=\gamma_t(T).$ We use induction on $n,$ the number of vertices of $T.$

By Corollary $6$,  the result is true for a path $P_6.$ Assume that for every tree $T'$ with $n'< n$ vertices belonging to the family ~$\mathcal{F},$ equality $\gamma_t(T'_{e,2})=\gamma_t(T')$ holds for any edge $e$ of $T'$. 

Let $T\in \mathcal{F}$ be a tree with $n>6$ vertices and let $e$ be any edge of $T.$ Since $T\in \mathcal{F},$ $T=T_j$ and is constructed from $P_6$ by applying $j-1$ basic operations. By Lemma~\ref{change} we can assume that $e\in E(T_{j-1})$. Since $|V(T_{j-1})|<|V(T_{j})|$, from the induction hypothesis, $\gamma_{t}((T_{j-1})_{e,2})=\gamma_t(T_{j-1}).$ Using Observation \ref{orodzina} we know $\gamma_t(T)=\gamma_t(T_{j-1})+2.$

We consider two cases:

$Case$ $1.$ If $T=T_j=O_1(T_{j-1})$ then we added a path $(x,w,v)$ to a vertex  of $T_{j-1}$ with status $A$.   If $D'$ is a minimum total dominating set of $(T_{j-1})_{e,2},$ then $D_1=D'\cup \{v,w\}$ is a total dominating set of $T_{e,2}$ with $|D_1|=\gamma_t(T_{j-1})+2=\gamma_t(T)$, so $\gamma_t(T_{e,2})\leq \gamma_t(T)$. Then  $\gamma_t(T_{e,2})= \gamma_t(T)$.

$Case$ $2.$ If $T=T_j=O_2(T_{j-1})$ then we added a path $(x,w,v,u)$ to a vertex  of $T_{j-1}$ with status $B$ or $C$. If $D'$ is a minimum total dominating set of $(T_{j-1})_{e,2},$ then $D_1=D'\cup \{w,v\}$ is a total dominating set of $T_{e,2}$ with $|D_1|=\gamma_t(T_{j-1})+2=\gamma_t(T)$, so $\gamma_t(T_{e,2})\leq  \gamma_t(T)$. Then  $\gamma_t(T_{e,2})=\gamma_t(T)$.
\end{proof}

The next observation and lemmas are necessaries in order to finish the proof of Theorem~\ref{main}.

\begin{obs}\label{strong}
If T is a tree with msd$_{\gamma_t}(T)=3$, then $T$ does not have a strong support vertex.
\end{obs}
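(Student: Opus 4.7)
The plan is to prove the contrapositive: if $T$ contains a strong support vertex $v$ adjacent to distinct leaves $u_1, u_2$, then $\text{msd}_{\gamma_t}(T)\leq 2$. The candidate edge is $e=vu_1$, and I aim to show the stronger claim $\gamma_t(T_{e,2})>\gamma_t(T)$, where $T_{e,2}$ replaces $e$ by the path $v-x_1-x_2-u_1$.

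First, because $T_{e,2}$ contains the adjacent degree-two vertices $x_1$ and $x_2$, it is not a star, so Observation \ref{property} yields a $\gamma_t(T_{e,2})$-set $D'$ that avoids the leaves. Since the leaves of $T_{e,2}$ coincide with those of $T$, this gives $u_1,u_2\notin D'$. Forcing the domination of $u_1$ and $u_2$ puts $x_2,v\in D'$, and forcing a $D'$-neighbor of the in-set vertex $x_2$ (with $u_1\notin D'$) puts $x_1\in D'$.

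Second, the candidate reduction is $D:=(D'\setminus\{x_1,x_2\})\cup\{u_2\}$. I verify that $D$ totally dominates $T$: for every $y\in V(T)\setminus\{v,u_1\}$ the $T$- and $T_{e,2}$-neighborhoods of $y$ agree, and any $D'$-dominator of $y$ lies in $V(T)\setminus\{x_1,x_2\}\subseteq D$ (because $x_1$ and $x_2$ are only adjacent to $v$ and $u_1$); the leaf $u_1$ is dominated in $T$ by $v\in D$; the vertex $v$ is dominated by the added $u_2\in D$; and $u_2$ is in turn dominated by $v$. Since $u_2\notin D'$, this gives $|D|=|D'|-1$, hence $\gamma_t(T)\leq|D'|-1<\gamma_t(T_{e,2})$, so $\text{msd}_{\gamma_t}(vu_1)\leq 2$.

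The main obstacle is the final step, the domination of $v$. The naive attempt $D:=D'\setminus\{x_1,x_2\}$ may fail, because the only neighbor of $v$ that the argument forces into $D'$ is $x_1$, and removing it could leave $v$ without a neighbor in $D$. Adjoining the spare leaf $u_2$ — available precisely because $v$ is a \emph{strong} support vertex — restores a total-dominating neighbor for $v$ while still ensuring $|D|<|D'|$; this is the unique point where the strong-support hypothesis is used in an essential way.
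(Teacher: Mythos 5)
Your proposal is correct and follows essentially the same route as the paper: subdivide the pendant edge at the strong support vertex twice, take a leaf-free minimum total dominating set of the subdivided tree, observe it must contain both subdivision vertices and $v$, and trade the two subdivision vertices for a single leaf adjacent to $v$. The only cosmetic difference is that you re-insert the second leaf $u_2$ where the paper re-inserts the subdivided leaf $u_1$ itself; both choices restore a dominator for $v$ and yield $\gamma_t(T)\leq |D'|-1<\gamma_t(T_{e,2})$.
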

\begin{proof}
Suppose msd$_{\gamma_t}(T)=3$ and $T$ has a strong support vertex $v$ adjacent to a leaf $u$. Let us subdivide the edge $e=uv$ with two vertices $a,b$ and let $D'$ be a minimum total dominating set with no end vertex of $T_{e,2}$. It is clear that  $a,b\in D'$. Since $v$ is a support in $T_{e,2}$, $v\in D'$. Hence, $(D'-\{a,b\})\cup \{u\}$ is a total dominating set in $T$, what implies $\gamma_t(T)\leq |D'|-1<\gamma_t(T_{e,2})$, a contradiction with msd$_{\gamma_t}(T)=3$. 
\end{proof}

\begin{lem}
Let $T$ be a tree with $n>6$ vertices such that msd$_{\gamma_t}(T)=3.$ Let $P=(v_0,\ldots,v_l)$ be a longest path of $T$ $(l\geq 5)$ and let $D$ be a minimum total dominating set with no end vertex of $T$. Then:

\begin{enumerate}
\item $d_T(v_1)=d_T(v_2)=2$;
\item  $v_3$ is not a support vertex. Moreover, if  $d_T(v_3)>2$, outside the path $P$, only  one $P_2$ path or $P_3$ paths  may be attached to $v_3$ and for $T'=T-\{ v_0,v_1,v_2\}$,   $\gamma_t(T)=\gamma_t(T')+2$. 
\end{enumerate}
\label{riti}
\end{lem}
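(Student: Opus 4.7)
The plan is to prove every sub-claim by one common template: assume the conclusion fails, pick an edge $e$ and an integer $k\in\{1,2\}$, exhibit $\gamma_t(T_{e,k})>\gamma_t(T)$, and thereby contradict $msd_{\gamma_t}(T)=3$. My three standing tools are Observation~\ref{strong} (no strong support vertex in $T$), the fact from~\cite{haynes} that adjacent support vertices force $sd_{\gamma_t}(T)=1$ (hence $msd_{\gamma_t}(T)=1$), and Observation~\ref{property}, which supplies a leaf-free minimum total dominating set $D^{*}$ in every subdivided tree $T_{e,k}$ (none of which is a star, since $n>6$). The common mechanism is: identify the vertices forced into $D^{*}$ by the rules ``a support vertex lies in $D^{*}$'' and ``every vertex of $D^{*}$ has a non-leaf neighbour in $D^{*}$'', and then show that deleting one subdivision vertex from $D^{*}$ yields a total dominating set of $T$.

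For part~1, $d_T(v_1)=2$: a non-leaf extra neighbour of $v_1$ would extend the longest path $P$, and a second leaf would make $v_1$ a strong support, so by Observation~\ref{strong} no extra neighbour exists. For $d_T(v_2)=2$: the adjacent-supports rule forbids $v_2$ from being a support, so any extra neighbour $y$ of $v_2$ is a non-leaf whose subtree, by the longest-path bound, has depth exactly one; Observation~\ref{strong} then gives $y$ a unique leaf neighbour $y'$. Subdividing $v_2 y$ once by a new vertex $a$, the leaves $v_0$ and $y'$ force $v_1,y\in D^{*}$, and the no-leaf condition then forces $v_2,a\in D^{*}$. The set $D^{*}\setminus\{a\}$ is a total dominating set of $T$: only $v_2$ and $y$ have altered neighbourhoods, and they are totally dominated in $T$ by $v_1$ and $v_2$ respectively. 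Hence $\gamma_t(T)<\gamma_t(T_{v_2y,1})$, a contradiction.

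For part~2, the same template directly handles ``$v_3$ is not a support'': a hypothetical leaf $z$ of $v_3$ is attacked by subdividing $v_3 z$ once, which forces $\{v_1,v_2,v_3,c\}\subseteq D^{*}$, and deleting $c$ produces a smaller total dominating set of $T$. When $d_T(v_3)>2$, let $w$ be any neighbour of $v_3$ outside $P$; then $w$ is a non-leaf and the longest-path bound forces the subtree at $w$ to have depth at most $2$ from $w$. A case analysis on the children of $w$, using Observation~\ref{strong} together with the adjacent-supports rule, leaves only $T_w\cong P_2$ or $T_w\cong P_3$; the remaining case ``$w$ has two support children $s_1,s_2$'' is killed by subdividing the edge $ws_1$ once, so that the set of forced vertices jumps from $\{w,s_1,s_2\}$ to $\{w,a,s_1,s_2\}$. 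The same mechanism applied at $v_3$ rules out two $P_2$-branches by subdividing $v_3 w_1$ once.

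For the equality $\gamma_t(T)=\gamma_t(T')+2$, the inequality ``$\le$'' is immediate because $D'\cup\{v_1,v_2\}$ is total dominating in $T$ for any $\gamma_t(T')$-set $D'$. For ``$\ge$'', take a leaf-free $\gamma_t(T)$-set $D$; the vertices $v_1$ and $v_2$ are forced into $D$, so $|D\setminus\{v_1,v_2\}|=\gamma_t(T)-2$. The only vertex whose neighbourhood changes on passing to $T'=T-\{v_0,v_1,v_2\}$ is $v_3$, and the branch analysis above shows that, whether the branch at $v_3$ is $P_2$ or $P_3$, its initial vertex $w_i$ is forced into $D$; therefore $v_3$ retains a neighbour in $D\setminus\{v_1,v_2\}$ and that restriction is a total dominating set of $T'$. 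I expect the most delicate step to be the branch case analysis at $v_3$, where the parallel subdivision arguments must be combined with the strong-support and adjacent-support constraints in a uniform way.
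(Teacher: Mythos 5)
Your proposal is correct and follows essentially the same strategy as the paper: force vertices into a leaf-free minimum total dominating set of the subdivided tree (via Observation~\ref{property}, Observation~\ref{strong} and the adjacent-support-vertices result) and then delete a subdivision vertex to contradict msd$_{\gamma_t}(T)=3$. The only differences are cosmetic — you use single subdivisions where the paper sometimes subdivides twice (both are legitimate under msd$_{\gamma_t}(T)=3$), and your case analysis of the branches at $v_3$ is, if anything, more explicit than the paper's.
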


\begin{proof} Let $D$ be a minimum total dominating set with no end vertex of $T$.
\begin{enumerate}

\item It is clear that $v_1,v_2\in D.$ By Observation~\ref{strong}, $d_T(v_1)=2.$ 
Suppose $d_T(v_2)>2.$ For the edge $e=v_0v_1$ consider the tree $T_{e,2},$ where we subdivide $e$ by two vertices $a,b.$ If $D'$ is a minimum total dominating set with no end vertex of $T_{e,2}$, then $a,b\in D'.$ If $v_2$ is a support vertex, then $v_2\in D'.$ If $v_2$ is not a support vertex, then it is a neighbour of a support vertex of degree two and in this case also $v_2\in D'.$  Then $(D'-\{a,b\})\cup \{v_1\}$ is a total dominating set of $T,$ a contradiction with msd$_{\gamma_t}(T)=3$. Thus $d_T(v_2)=2.$

\item Suppose  $v_3$ is a support vertex adjacent to a leaf $y$.  Consider $T_{e,2},$ where $e=v_3y$ and denote the two vertices on the subdivided edge  by $a,b.$ If $D'$ is a minimum total dominating set with no end vertex of $T_{e,2}$, then $a,b,v_1,v_2\in D'$. Then $(D'-\{a,b\})\cup \{v_3\}$ is a total dominating set of $T,$ a contradiction with msd$_{\gamma_t}(T)=3$.

Suppose  $d_T(v_3)>2$. If $d_{T}(v_3,\Omega(T))=2$, then $v_3$ is adjacent to a support vertex $x$ which is a neighbour of a leaf $y$. By Observation~\ref{strong} $x$ is not a strong support vertex, if  $d_T(x)>2$  then $x$  belongs to a longest path of $T$ and by $1$, $d_T(x)=2$, a contradiction. Since msd$_{\gamma_t}(T)=3$  outside the path $P$, only one $P_2$ path may be attached to $v_3$. Now, if $d_{T}(v_3,\Omega(T))=3$, then there are vertices $x,y,z$ such that $(z,y,x,v_3,\ldots,v_l)$  is a longest path of $T$ and by $1$, $d_T(x)=d_T(y)=2$. Hence, outside the path $P$, only  $P_3$'s may be attached to $v_3$.

Observe that for any minimum total dominating set with no end vertex $D$ of $T$, $D-\{v_1,v_2\}$ is a total dominating set of $T'$.  Similarly, for any minimum total dominating set with no end vertex $D'$ of $T'$, $D'\cup \{v_1,v_2\}$ is a total dominating set of $T$ and $\gamma_t(T)\leq \gamma_t(T')+2$. Therefore, $\gamma_t(T)=\gamma_t(T')+2$.

\end{enumerate}

\end{proof}

As a consequence of the last case, if $d_T(v_3)>2$, then we can observe that every minimum total dominating set with no end vertex $D$ of $T$ has the form $D= D'\cup \{v_1,v_2\}$, where  $D'$ is a minimum total dominating set with no end vertex of $T'$. Equivalently, every $D'$ has the form $D'=D-\{v_1,v_2\}$.

\begin{lem}
If $T$ is a tree with msd$_{\gamma_t}(T)=3$, then sd$_{\gamma_t}(T)=3$.
\label{t2}
\end{lem}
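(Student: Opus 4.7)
The plan is to invoke Theorem~\ref{rodziny} and prove the equivalent statement: if $\mathrm{msd}_{\gamma_t}(T)=3$ then $T\in\mathcal{F}$. I proceed by strong induction on $n=|V(T)|$. The base cases $n\le 6$ are handled directly: for $n<6$ every tree has $\mathrm{msd}_{\gamma_t}(T)\le 2$ (by Corollary~6 for short paths and direct verification of the remaining small trees), and for $n=6$ the only tree realising $\mathrm{msd}_{\gamma_t}(T)=3$ is $P_6$, which is the seed of $\mathcal{F}$.

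For the inductive step assume $n>6$ and fix a longest path $P=(v_0,\dots,v_l)$. The hypothesis $\mathrm{msd}_{\gamma_t}(T)=3$, together with a direct inspection of trees of diameter at most four, forces $l\ge 5$, so Lemma~\ref{riti} applies: $d_T(v_1)=d_T(v_2)=2$, $v_3$ is not a support vertex, and if $d_T(v_3)>2$ then the branches at $v_3$ off $P$ are either a single $P_2$ or several $P_3$'s. A short local subdivision argument, in the spirit of Observation~\ref{strong} and the proof of Lemma~\ref{riti} (subdividing the edge $v_3x$ toward the would-be $P_2$ branch and tracing its no-leaf minimum total dominating set), eliminates the $P_2$-branch possibility under $\mathrm{msd}_{\gamma_t}(T)=3$, so only two cases remain: (i) $d_T(v_3)=2$, and (ii) $d_T(v_3)>2$ with every extra branch equal to $P_3$.

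In each case I peel off either $\{v_0,v_1,v_2\}$ (planning to recover $T$ from $T'$ by an $O_1$ attachment at $v_3$) or $\{v_0,v_1,v_2,v_3\}$ (recovering $T$ by an $O_2$ attachment at $v_4$), choosing the variant consistent with the local structure, and call the resulting tree $T'$. The remark following Lemma~\ref{riti} gives $\gamma_t(T)=\gamma_t(T')+2$, and the same pendant-path argument applied to $T_{e,k}$ for any edge $e\in E(T')$ and $k\in\{1,2\}$ yields $\gamma_t(T_{e,k})=\gamma_t(T'_{e,k})+2$. Since $\mathrm{msd}_{\gamma_t}(T)=3$ implies $\gamma_t(T_{e,k})=\gamma_t(T)$ for all such $e,k$, we get $\gamma_t(T'_{e,k})=\gamma_t(T')$, hence $\mathrm{msd}_{\gamma_t}(T')\ge 3$, and then equality by Theorem~\ref{obs1}. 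The inductive hypothesis gives $T'\in\mathcal{F}$.

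The concluding and, I expect, most delicate step is to promote $T'\in\mathcal{F}$ to $T\in\mathcal{F}$ via a legitimate basic operation. This requires selecting a labeling of $T'$ in $\mathcal{F}$ whose attachment vertex carries the prescribed status — status $A$ for $O_1$, or status $B$ or $C$ for $O_2$. Observation~\ref{orodzina} identifies the status-$B$-and-$C$ vertices of $T'$ with a minimum total dominating set, and the remark following Lemma~\ref{riti} characterises every no-leaf minimum total dominating set of $T$ as $D'\cup\{v_1,v_2\}$ with $D'$ a no-leaf minimum total dominating set of $T'$. Choosing $D'$ so that the attachment vertex has the correct membership in $D'$ pins down the required labeling of $T'$, the basic operation becomes legitimate, and hence $T\in\mathcal{F}$, which closes the induction.
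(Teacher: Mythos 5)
There is a genuine gap: your claim that a local subdivision argument ``eliminates the $P_2$-branch possibility'' at $v_3$ is false, and dropping that case removes exactly the configuration on which the paper's proof spends most of its effort. Concretely, apply Operation $O_1$ to a central (status $A$) vertex of the seed $P_6$: the resulting $9$-vertex tree lies in $\mathcal{F}$, hence has msd$_{\gamma_t}=3$ by Lemma~\ref{t1}, and its longest path can be chosen so that a single $P_2$ hangs off $v_3$ (i.e.\ $d_T(v_3)>2$ and $d_T(v_3,\Omega(T))=2$). So trees with msd$_{\gamma_t}(T)=3$ genuinely exhibit this branch, your case analysis is incomplete, and no subdivision argument can rule it out. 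The paper keeps this subcase and handles it by proving a dedicated claim --- that there is a $\gamma_t(T')$-set $D^*$ with no end vertex containing $v_4$ with $|N_{T'}(v_4)\cap D^*|\geq 2$ --- precisely in order to verify $\gamma_t(T'_{e,2})=\gamma_t(T')$ for the two edges $xy$ and $xv_3$ of the $P_2$-branch, where the generic peeling argument does not apply.

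Two further points in your write-up are too quick even in the cases you do treat. First, the assertion $\gamma_t(T_{e,k})=\gamma_t(T'_{e,k})+2$ for \emph{every} edge $e\in E(T')$ is not justified when $e$ lies in a branch attached at $v_3$ (or incident with $v_4$ in the $O_2$ case): subdividing such an edge perturbs the very structure used to peel off the pendant path, and the paper replaces the generic argument there by explicit constructions of total dominating sets of $T'_{e,2}$ (e.g.\ $(D^*-\{x,y\})\cup\{a,b\}$ for $e=yz$ in the $P_3$-branch subcase). Second, in the case $d_T(v_3)=2$ you cannot always apply $O_2$ at $v_4$: the paper must first show that $d_T(v_4,\Omega(T))\in\{2,3\}$ contradicts msd$_{\gamma_t}(T)=3$, and only then, in the surviving subcases, does $v_4$ receive status $B$ or $C$ in $T'$. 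Relatedly, your closing mechanism of ``choosing $D'$ so that the attachment vertex has the correct membership'' does not pin down a status: the status of a vertex of $T'\in\mathcal{F}$ is fixed by the constructive labelling of $T'$, and Observation~\ref{orodzina} only says that $\mathcal{B}\cup\mathcal{C}$ is \emph{one} minimum total dominating set, so membership in some chosen minimum total dominating set does not determine the label.
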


\begin{proof} From Theorem \ref{rodziny}, it is enough to prove that if  $T$ is a tree with msd$_{\gamma_t}(T)=3$, then $T$ belongs to the family $\mathcal{F}.$ We use induction on $n,$ the number of vertices of a tree $T.$ The smallest tree $T$ such that msd$_{\gamma_t}(T)=3$ is a path $P_6$ and $P_6\in \mathcal{F}.$ Assume that every tree $T'$ with less than $n$ vertices such that msd$_{\gamma_t}(T')=3$ belongs to the family $\mathcal{F}.$

Let $T$ be a tree with msd$_{\gamma_t}(T)=3$ and $n>6$ vertices. Consider $P=(v_0,\ldots,v_l)$ a longest path of $T$, $l\geq 5$, and let $D$ be a minimum total dominating set with no end vertex of $T$. 

By Lemma~\ref{riti}, $d_T(v_1)=d_T(v_2)=2$. So we consider the next two cases.
\begin{enumerate}
\item $d_T(v_3)>2.$ By Lemma~\ref{riti}, $v_3$ is not a support vertex. We have the following subcases.

\begin{itemize}

\item $d_{T}(v_3,\Omega(T))=2$.  By Lemma ~\ref{riti},  outside the path $P$, only one $P_2$ path may be attached to $v_3$. Let us denote $x,y$ the vertices of that path, where $y$ is a leaf of $T$.  Again by Lemma ~\ref{riti}, for $T'=T-\{v_0,v_1,v_2\}$, $\gamma_t(T')=\gamma_t(T)-2$. 

For any $e\in E(T')-\{xy,xv_3\}$, $\gamma_t(T'_{e,2})$ $=\gamma_t(T_{e,2})-2=\gamma_t(T)-2=\gamma_t(T').$ In order to see that also for $e\in \{xy,xv_3\}$, $\gamma_t(T'_{e,2})=\gamma_t(T')$, we claim that  there exists a $\gamma_t(T')$-set $D^*$ with no end vertex such that $v_4\in D^*$ and $|N_{T'}(v_4)\cap D^*|\geq 2$. 

Proof of the claim: Consider $T_{e,2},$ where $e=v_3v_4$, and denote the two sudivision vertices by $a,b.$ If $D'$ is a minimum total dominating set with no end vertex of $T_{e,2},$ then $\{v_1,v_2,x,v_3\}\subset D'$. If  $\{ a,b\} \cap D'\neq \emptyset$, then $D=D'-\{ a,b\}$ is a total dominating set of $T$  with $|D|<\gamma_t(T_{e,2})$, which is a contradiction with $\gamma_t(T)=\gamma_t(T_{e,2})$. Therefore, there exists $z\in N_{T_{e,2}}(v_4)$, $z\neq b$ such that $\{ v_4, z\} \subset D'$, and then $D^*=D'-\{ v_1,v_2\}$  is a $\gamma_t(T')$-set with no end vertex such that $v_4\in D^*$ and $|N_{T'}(v_4)\cap D^*|\geq 2$. 

Now, without loss of generality, consider $e=xy$ and subdivision of the edge $xy$ with vertices $c,d$. We know that  $(D^*-\{x,v_3\})\cup \{c,d\}$ is a total dominating set in $T'_{xy,2}$, so $\gamma_t(T'_{e,2})=\gamma_t(T')$.

Finally, for any edge $e\in E(T')$ we have $\gamma_t(T')=\gamma_t(T'_{e,2})$. Thus msd$_{\gamma_t}(T')=3$ and from the induction hypothesis $T'\in \mathcal{F}.$ Since $sta(v_3)=A,$ it is possible to obtain $T$ from $T'$ by Operation $O_1.$ It implies that $T\in \mathcal{F}.$

\item $d_{T}(v_3,\Omega(T))=3.$ Thus,   by Lemma  ~\ref{riti},  outside the path $P$, only $P_3$'s may be attached to $v_3$. Let us denote $x,y,z$ the vertices of one of such paths, where $z$ is a leaf of $T$. Define $T'=T-\{v_0,v_1,v_2\}$.

For any $e\in E(T')-\{xy,yz,xv_3\}$, $\gamma_t(T'_{e,2})=\gamma_t(T_{e,2})-2=\gamma_t(T)-2=\gamma_t(T').$ Since msd$_{\gamma_t}(T)=3$ and by Lemma ~\ref{riti}, $\gamma_t(T')=\gamma_t(T)-2$, there exists a $\gamma_t(T')$-set $D^*$ with no end vertex such that $\{x, y, v_3, v_4\} \subset D^*$ (if not, then $\gamma_t(T_{v_3v_4,2})>\gamma_t(T)$, a contradiction).
It is enough to consider subdivision of the edge $yz$ with vertices $a,b$. Hence $(D^*-\{x,y\})\cup \{a,b\}$ is a total dominating set in $T'_{yz,2}$. Finally, for any edge $e\in E(T')$ we have $\gamma_t(T')=\gamma_t(T'_{e,2})$. Thus msd$_{\gamma_t}(T')=3$ and from the induction hypothesis $T'\in \mathcal{F}.$ Since $sta(v_3)=A$, it is possible to obtain $T$ from $T'$ by Operation $O_1.$ Hence, $T\in \mathcal{F}.$
\end{itemize}
\item $d_T(v_3)=2.$ We have two subcases.
\begin{itemize}
 \item $d_T(v_4)=2$ or ($d_T(v_4)>2$ and $d_T(v_4,\Omega(T))\in \{1,4\}$). It is clear that $v_1,v_2\in D$ for any minimum total dominting set without end vertex of $T$. Without lost of generality we can suppose that $v_3\notin D$.  If we consider $T'=T-\{v_0,v_1,v_2,v_3\},$ then  $\gamma_t(T')=\gamma_t(T)-2$ and for any $e\in E(T')$,  $\gamma_t(T'_{e,2})=\gamma_t(T_{e,2})-2=\gamma_t(T)-2=\gamma_t(T').$ Thus msd$_{\gamma_t}(T')=3$, from the induction hypothesis $T'\in \mathcal{F}$ and by the definition of the family $\mathcal{F}$, the status of the vertex $v_4$ is $B$ or $C$.  So $T$ can be obtained from $T'$ by Operation $O_2,$ what implies $T\in \mathcal{F}.$

\item $d_T(v_4,\Omega(T))\in \{2,3\}.$ Suppose $d_T(v_4,\Omega(T))=2$ , then $v_4$ is adjacent to a support vertex $y$. Consider $T_{e,2},$ where $e=v_3v_4$ and denote the two subdivision vertices by $a,b.$ If $D'$ is a minimum total dominating set with no end vertex of $T_{e,2},$ then $v_1,v_2,y,v_4\in D'$. Since $D'$ is total dominating then 
there exist $z\in D\cap \{b,v_3\}\neq \emptyset$ such that  $D'-\{z\}$ is a total dominating set of $T,$ a contradiction with msd$_{\gamma_t}(T)=3$.
The case of $d_T(v_4,\Omega(T))=3$ is similar. 
\end{itemize}
\end{enumerate}

\end{proof}

\subsection{Trees with the total domination multisubdivision number equal to 1}
In this section we give a characterization of trees $T$ of order at least three with sd$_{\gamma_t}(T)=\textrm{msd}_{\gamma_t}(T)=1$. In order to prove the main Theorem~\ref{final} we need the next technical lemmas.

\begin{lem}
Let $T$ be a tree of order $n\geq 3$ such that
\begin{enumerate}
\item for any end-vertex $u$ there exists a $\gamma_t(T)$-set $D$ such that $u\in D$ and
\item for any inner edge $uv$ there is a $\gamma_t(T)$-set $D$ such that
\begin{itemize}
\item [a)]$|\{u,v\}\cap D|=1$, say $u\in D$, and $v\not \in\ $PN$_T[u,D]$ or
\item [b)]$|\{u,v\}\cap D|=2$ and at least one of the following conditions holds:
\begin{itemize}
\item [b1)]$ |N_T(u)\cap D|\geq 2$ and $|N_T(v)\cap D|\geq 2$;  
\item [b2)]$N_T(u)\cap D=\{v\}$ and $\Big(PN_T[u,D] = \emptyset$ or $\big( PN_T[v,D] = \emptyset$ and  $|N_T(x)\cap D|\geq 2$ for any vertex $x\in (N_T(v)\cap D)-\{u\}\big) \Big)$; 
\item [b3)]$N_T(v)\cap D=\{u\}$ and $\Big(PN_T[v,D] = \emptyset$ or $\big( PN_T[u,D] = \emptyset$ and $|N_T(x)\cap D|\geq 2 $ for any vertex $x\in (N_T(u)\cap D)-\{v\}\big)\Big)$.
\end{itemize}
\end{itemize}
\end{enumerate}
Then sd$_{\gamma_t}(T)>1$.
\label{lemaRitaMagda}
\end{lem}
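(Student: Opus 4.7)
The plan is to establish sd$_{\gamma_t}(T)>1$ by showing that for every edge $e\in E(T)$, subdividing $e$ once does not increase the total domination number; equivalently, I will construct a total dominating set of $T_e$ of size at most $\gamma_t(T)$. Writing $e=uv$ and letting $w$ be the subdivision vertex, in each case I start from a carefully chosen $\gamma_t(T)$-set $D$ supplied by the hypotheses, and either keep $D$ unchanged or swap one of $u,v$ for $w$.

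For a pendant edge, with $v$ the leaf, I invoke condition~(1) to obtain a $\gamma_t(T)$-set $D\ni v$; since $u$ is the only neighbour of $v$ in $T$, $u\in D$ as well, and I check that $D'=(D-\{v\})\cup\{w\}$ is a total dominating set of $T_e$ of the same size. For an inner edge $e=uv$ I apply condition~(2) and split into the four sub-cases. In (a), taking $D'=D$ works: $w$ is dominated by $u$, the condition $v\notin PN_T[u,D]$ gives $v$ a neighbour in $D-\{u\}$ which survives in $T_e$, and $u$ still has a neighbour in $D$ different from $v$ since $v\notin D$. Sub-case~(b1) again admits $D'=D$, because each of $u,v$ has a neighbour in $D$ different from the other, and that neighbour remains adjacent in $T_e$. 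In (b2) I take $D'=(D-\{u\})\cup\{w\}$ when $PN_T[u,D]=\emptyset$, and $D'=(D-\{v\})\cup\{w\}$ otherwise; sub-case~(b3) is symmetric to (b2) by exchanging the roles of $u$ and $v$.

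The main obstacle is the swap in (b2), where I must verify that removing $v$ (respectively $u$) does not leave any vertex undominated in $T_e$. For vertices $x\in N_T(v)-\{u\}$ with $x\notin D$, the hypothesis $PN_T[v,D]=\emptyset$ places a neighbour of $x$ in $D-\{v\}$; for $x\in N_T(v)\cap D$ with $x\neq u$, the vertex $x$ itself lies in $D'$ but still needs a neighbour in $D'$, and this is exactly where the extra condition $|N_T(x)\cap D|\geq 2$ guarantees a neighbour of $x$ in $D-\{v\}$. The analogous bookkeeping when $u$ is removed is controlled by $PN_T[u,D]=\emptyset$, which forces every $x\in N_T(u)-\{v\}$ to have a neighbour in $D-\{u\}$; and no such $x$ can itself lie in $D$, because $N_T(u)\cap D=\{v\}$ excludes this. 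Once these checks are completed, every case yields $|D'|\leq\gamma_t(T)$, whence $\gamma_t(T_e)\leq\gamma_t(T)$ for every edge $e$, concluding sd$_{\gamma_t}(T)>1$.
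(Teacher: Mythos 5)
Your proposal is correct and follows essentially the same route as the paper: the same case split (pendant edge via condition (1), then a), b1), b2), b3)) and the same replacement sets ($D$ itself in a) and b1), and $(D-\{u\})\cup\{w\}$ or $(D-\{v\})\cup\{w\}$ in the remaining cases). You in fact supply more of the verification details than the published proof does.
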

\begin{proof}
Let $e=uv$ be an edge of the tree $T$. Let us subdivide the edge $e$ with a vertex $w$. If $u\in \Omega (T)$, then there is a $\gamma_t(T)$-set  $D$ containing $u$ and $v$. Thus $(D-\{u\})\cup \{w\}$ is a $\gamma_t(T_{uv})$-set and $\gamma_t(T)=\gamma_t(T_{uv}).$ 

Suppose that $\{u,v\}\cap \Omega (T)=\emptyset.$ 

If $a)$ holds, then $D$ is also a $\gamma_t(T_{uv})$-set and again we obtain $\gamma_t(T)=\gamma_t(T_{uv}).$

Assume now $b)$ holds,

\begin{itemize}
\item if condition $b1)$ holds, then $D$ is also a $\gamma_t(T_{uv})$-set.
\item if condition $b2)$ holds, we have two cases: if $N_T(u)\cap D=\{v\}$ and $PN_T[u,D] = \emptyset$, then $(D-\{u\})\cup \{w\}$ is a $\gamma_t(T_{uv})$-set. If $N_T(u)\cap D=\{v\}$ and $PN_T[v,D] = \emptyset$ and for any vertex $x\in (N_T(v)\cap D)-\{u\}$ we have $|N_T(x)\cap D|\geq 2$, then $(D-\{v\})\cup \{w\}$ is a $\gamma_t(T_{uv})$-set.
\item similarly if condition $b3)$ holds.
\end{itemize}

In all the cases we have found a $\gamma_t(T_{uv})$-set of cardinality $\gamma_t(T)$. This implies that sd$_{\gamma_t}(T)> 1$.
\end{proof}

\begin{lem}
Let T be a tree of order $n\geq 3$ having an inner edge $uv\in E(T)$ such that for any $\gamma_t(T)$-set $D$ we have:
\begin{enumerate}
\item if $|\{u,v\}\cap D|=1$, let us say $u\in D$, then $v\in PN_T[u,D]$ and \item if $|\{u,v\}\cap D|=2$, then $N_T(u)\cap D=\{v\}$ or $N_T(v)\cap D=\{u\}$, and if $N_T(u)\cap D=\{v\}$, then $PN_T[u,D]\not = \emptyset$ and $\big( PN_T[v,D]\not = \emptyset$ or $N_T(x)\cap D=\{v\}$ for a vertex $x\in (N_T(v)\cap D)-\{u\} \big)$. Similarly if $N_T(v)\cap D=\{u\}$. 
\end{enumerate} 
Then sd$_{\gamma_t}(T)=1$.
\label{lemaMagdaRita}
\end{lem}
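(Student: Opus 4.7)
My plan is to show that $\gamma_t(T_{uv})>\gamma_t(T)$, which immediately yields $\mathrm{sd}_{\gamma_t}(T)=1$. I will argue by contradiction: assume $\gamma_t(T_{uv})\le \gamma_t(T)$, fix a $\gamma_t(T_{uv})$-set $D'$, and let $w$ denote the subdivision vertex (so the new edges are $uw$ and $wv$). The argument will split by the intersection $D'\cap\{u,v,w\}$, and in every branch I expect either to build a TDS of $T$ strictly smaller than $\gamma_t(T)$ (contradicting minimality) or to produce a $\gamma_t(T)$-set $D$ that directly violates the hypotheses placed on the edge $uv$.

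If $w\notin D'$, then $D'$ is itself a TDS of $T$ of size $\gamma_t(T)$: every vertex outside $\{u,v\}$ keeps its neighborhood, while $u$ and $v$ were dominated in $T_{uv}$ by vertices other than $w$. Hence $D'$ is actually a $\gamma_t(T)$-set, and since $w$ still needs a dominator, at least one of $u,v$ belongs to $D'$. I plan to apply hypothesis~1 when only one of them is in $D'$, or the first disjunct of hypothesis~2 when both are, to obtain that some vertex of $\{u,v\}$ has only the other as a neighbor inside $D'$. Replacing the edge $uv$ by the path $u,w,v$ then deprives this vertex of every neighbor in $D'$ (recall $w\notin D'$), contradicting that $D'$ totally dominates $T_{uv}$.

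If $w\in D'$, then $\{u,v\}\cap D'\ne\emptyset$ so that $w$ is dominated. When $u,v\in D'$, the set $D:=D'\setminus\{w\}$ is a TDS of $T$ of size $|D'|-1<\gamma_t(T)$, because $u$ and $v$ now dominate each other through the restored edge $uv$; this is an immediate contradiction. Otherwise exactly one of them, say $u$, lies in $D'$. Either $u$ has some neighbor of $D'$ inside $N_T(u)\setminus\{v\}$, in which case $D'\setminus\{w\}$ is again a TDS of $T$ smaller than $\gamma_t(T)$, or $w$ is the only neighbor of $u$ in $D'$. In this last case I set $D:=(D'\setminus\{w\})\cup\{v\}$; a routine verification shows that $D$ is a $\gamma_t(T)$-set with $u,v\in D$ and $N_T(u)\cap D=\{v\}$, so hypothesis~2 must apply to $D$.

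The hardest step is discharging hypothesis~2 in this final subcase. Beyond $PN_T[u,D]\ne\emptyset$, it requires that either $PN_T[v,D]\ne\emptyset$ or that there exists some $x\in(N_T(v)\cap D)\setminus\{u\}$ with $N_T(x)\cap D=\{v\}$. Using the identity $D'=(D\setminus\{v\})\cup\{w\}$, I will translate each alternative back into $T_{uv}$: any candidate $y\in PN_T[v,D]$, or such an $x$, lies in $V(T)\setminus\{u,v,w\}$ and therefore has identical neighborhoods in $T$ and in $T_{uv}$; the absence of triangles in the tree forces $y,x\not\sim u$, and combined with $v\notin D'$ a short calculation yields $N_T(y)\cap D'=\emptyset$ or $N_T(x)\cap D'=\emptyset$, contradicting that $D'$ totally dominates such a vertex in $T_{uv}$. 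The symmetric subcase in which only $v\in D'$ is handled in the same way via the symmetric clause of hypothesis~2. The delicate bookkeeping is precisely this final translation: the single swap of $w$ for $v$ from $D'$ to $D$ must be shown to leave no room for any private-neighbor witness to survive in $D'$.
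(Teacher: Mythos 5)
Your proposal is correct and follows essentially the same route as the paper's proof: the same case analysis on $D'\cap\{u,v,w\}$, the same replacement $D=(D'-\{w\})\cup\{v\}$ in the delicate subcase, and the same use of hypotheses 1 and 2 to rule out the constructed sets being minimum total dominating sets of $T$. The only difference is cosmetic — you frame the argument as a contradiction with $\gamma_t(T_{uv})\le\gamma_t(T)$, whereas the paper directly shows each $\gamma_t(T_{uv})$-set yields $\gamma_t(T_{uv})>\gamma_t(T)$.
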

\begin{proof}
We subdivide the edge $uv$ with vertex $w$ and let $D'$ be a $\gamma_t$-set of $T_{uv}$. 

\begin{enumerate}
\item If $w\in D'$, then at least one of $u,\ v$ belongs to $D'$.
\begin{itemize}
\item Suppose $\{u,w,v\}\subseteq D'$, then $D'-\{w\}$ is a total dominating set of $T$ and $\gamma_t(T)<\gamma_t(T_{uv})$.

\item $|\{u,v\}\cap D'|=1$ and without loss of generality suppose  $\{u,w\}\subseteq D'$. Thus, if $|N_{T_{uv}}(u)\cap D'|\geq 2$, then $D'-\{w\}$ is a total dominating set of $T$ and $\gamma_t(T)<\gamma_t(T_{uv})$. In the other case, if $N_{T_{uv}}(u)\cap D'=\{w\}$, then $D=(D'-\{w\})\cup \{v\}$ is a total dominating set of $T$ such that PN$_T[v,D]=\emptyset$ and for any vertex $x\in (N_T(v)\cap D)-\{u\}$ we have $|N_T(x)\cap D|\geq 2$, so by hypothesis $2$, $\gamma_t(T_{uv})=|D|>\gamma_t(T)$.
\end{itemize}
\item If $w\not \in D'$, then we have two possibilities: 

\begin{itemize}
\item $|\{u,v\}\cap D'|=1$ and we assume, without loss of generality, $u\in D'$. Then $D'$ is a total dominating set in $T$ such that $v\not \in\ $PN$_T[u,D']$ and by hypothesis $1$, $|D'|>\gamma_t(T)$.

\item If $\{u,v\}\subseteq D'$, then $D'$ is total dominating set of $T$ such that $|N_T(u)\cap D'|\geq 2$ and $|N_T(v)\cap D'|\geq 2$, again we have that  $|D'|>\gamma_t(T)$. In all of the cases we obtained $\gamma_t(T_{uv})>\gamma_t(T)$, what implies sd$_{\gamma_t}(T)=1$.
\end{itemize}
\end{enumerate}
\end{proof}

It is straightforward that from Lemma~\ref{o2}, Lemma~\ref{lemaRitaMagda} and Lemma~\ref{lemaMagdaRita} we have the next Theorem.
\begin{thm}\label{final}
Let T be a tree of order $n\geq 3$. Then sd$_{\gamma_t}(T)=1$ if and only if $T$ has 
\begin{itemize}
\item a leaf  which does not belong to any $\gamma_t(T)$-set or
\item an inner edge $uv\in E(T)$ such that for any $\gamma_t(T)$-set $D$ 
\begin{enumerate}
\item if $|\{u,v\}\cap D|=1$, let us say $u\in D$, then $v\in PN_T[u,D]$ and
\item if $|\{u,v\}\cap D|=2$, then $N_T(u)\cap D=\{v\}$ or $N_T(v)\cap D=\{u\}$, and if $N_T(u)\cap D=\{v\}$, then $PN_T[u,D]\not = \emptyset$ and $\big( PN_T[v,D]\not = \emptyset$ or $N_T(x)\cap D=\{v\}$ for a vertex $x\in (N_T(v)\cap D)-\{u\}\big)$. Similarly if $N_T(v)\cap D=\{u\}$. 
\end{enumerate} 
\end{itemize}
\end{thm}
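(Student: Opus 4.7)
The plan is to derive the theorem by combining Lemma~\ref{o2}, Lemma~\ref{lemaRitaMagda} and Lemma~\ref{lemaMagdaRita}, noting that on each inner edge the hypotheses of the latter two lemmas are logical negations of one another.

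For the sufficiency direction I split on which bullet is assumed. If $T$ has a leaf belonging to no $\gamma_t(T)$-set, then Lemma~\ref{o2} gives $\textrm{sd}_{\gamma_t}(T)=1$ at once. Otherwise, $T$ has an inner edge $uv$ whose associated conditions (1) and (2) are verbatim the hypotheses of Lemma~\ref{lemaMagdaRita}, so that lemma yields $\textrm{sd}_{\gamma_t}(T)=1$.

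For the necessity direction I argue by contrapositive: assuming neither bullet holds, I verify the hypotheses of Lemma~\ref{lemaRitaMagda}. Failure of the first bullet means every end-vertex lies in some $\gamma_t(T)$-set, which is condition~(1) of that lemma. For condition~(2), fix an inner edge $uv$. Since the second bullet fails for $uv$, there is a $\gamma_t(T)$-set $D$ at which either $|\{u,v\}\cap D|=1$ with, say, $u\in D$ and $v\notin PN_T[u,D]$, which is exactly alternative~(a) of the lemma, or $|\{u,v\}\cap D|=2$ together with the negation of the long conjunction in hypothesis~(2) of Lemma~\ref{lemaMagdaRita}.

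The main obstacle, and essentially the only real work, is the last logical check: verifying that this negation coincides with the disjunction b1\,$\vee$\,b2\,$\vee$\,b3. Using that $u,v$ are neighbours both lying in $D$, one has $N_T(u)\cap D\neq \{v\}$ if and only if $|N_T(u)\cap D|\geq 2$, and symmetrically for $v$; with this equivalence the negation falls apart into three pieces. Failing the clause ``$N_T(u)\cap D=\{v\}$ or $N_T(v)\cap D=\{u\}$'' yields b1; failing the $N_T(u)\cap D=\{v\}$-implication yields b2; failing its symmetric counterpart yields b3. Thus condition~(2) of Lemma~\ref{lemaRitaMagda} holds for every inner edge, and the lemma gives $\textrm{sd}_{\gamma_t}(T)>1$, completing the contrapositive.
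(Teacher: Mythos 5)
Your proposal is correct and follows exactly the route the paper intends: the paper gives no explicit proof beyond asserting that the theorem is a straightforward consequence of Lemma~\ref{o2}, Lemma~\ref{lemaRitaMagda} and Lemma~\ref{lemaMagdaRita}, and your derivation (sufficiency via Lemma~\ref{o2} and Lemma~\ref{lemaMagdaRita}, necessity by contrapositive via Lemma~\ref{lemaRitaMagda}) is precisely that combination. The logical bookkeeping you supply --- in particular the observation that for adjacent $u,v\in D$ one has $N_T(u)\cap D\neq\{v\}$ iff $|N_T(u)\cap D|\geq 2$, which makes the negation of hypothesis~(2) of Lemma~\ref{lemaMagdaRita} coincide with b1$\vee$b2$\vee$b3 --- is sound and fills in the only detail the paper leaves implicit.
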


\textbf{Acknowledgements}

The authors thank the financial support received from Grant UNAM-PAPIIT IN-117812 and SEP-CONACyT.

\end{document}